\newtheorem{thm}{Theorem}[section]
\newtheorem{def.}{Definition}[section]
\newtheorem{cor}{Corollary}[section]
\numberwithin{table}{section}
\begin{document}

\title{The minimum number of Fox colors modulo 13 is 5}
\author{Filipe Bento\\
        Department of Computer Science and Engineering
        \and
        Pedro Lopes\\
        Center for Mathematical Analysis, Geometry, and Dynamical Systems\\
        Department of Mathematics\\
        \and
        Instituto Superior T\'ecnico, University  of Lisbon\\
        Av. Rovisco Pais\\
        1049-001 Lisbon\\
        Portugal\\
        \texttt{filipe.s.bento@tecnico.ulisboa.pt, pelopes@math.tecnico.ulisboa.pt}\\
}
\date{October 04, 2015}
\maketitle

\begin{abstract}
In this article we show that if a knot diagram admits a non-trivial coloring modulo 13 then there is an equivalent diagram which can be colored with 5 colors. Leaning on known results, this implies that the minimum number of colors modulo 13 is 5.
\end{abstract}

\bigbreak

Keywords: knots, links, colorings, minimum number of colors.

\bigbreak

MSC 2010: 57M27

\bigbreak

\section{Introduction}

\noindent

The Fox colorings of a knot or link (\cite{CFox}, exercise 6 on page 92) are the solutions of a system of linear homogeneous equations read off a diagram of the knot or link at issue. Arcs of the diagram are envisaged as algebraic variables and at each crossing of the diagram the equation ``twice the over-arc minus the under-arcs equals zero'' is read. The coefficient matrix of this system of equations is called {\bf coloring matrix} of the diagram under study. It has the following feature. Along each row there are exactly one $2$, two $-1$'s and the rest of the entries are $0$'s. It follows that the determinant of the coloring matrix is $0$. Furthermore, upon performance of a Reidemeister move on the diagram, the coloring matrix corresponding to the new diagram relates to the coloring matrix corresponding to the former diagram by elementary transformations on matrices. Thus the equivalence class of the coloring matrix for any diagram of the knot under study is an invariant. Let us choose for representative of this equivalence class the Smith Normal Form (SNF) of the coloring matrix. Since the determinant of these matrices is $0$, then one of the entries of the diagonal of the SNF is $0$ and this corresponds to the monochromatic solutions i.e., the solutions obtained by assigning the same color (number) to each arc of the diagram. Polichromatic solutions, also known as non-trivial solutions, are obtained if there is at least one more $0$ along the diagonal of the SNF. In general, especially in the case of knots and non-split links, this involves working over the modular integers for a specific prime modulus $p$. If our knot or link, $L$, admits non-trivial colorings over a modulus $p$, Harary and Kauffman, \cite{Frank},  introduced the minimum number of colors of $L$ mod $p$, notation $mincol_p (L)$ to be the minimum number of distinct colors it takes to assemble a non-trivial coloring mod $p$, the minimum being taken over all diagrams of the link at issue.

At this point we warn the reader that any knot or link considered in this article has non-zero determinant, the {\bf determinant} of the knot or link being the product of the entries of the diagonal of its SNF but the $0$ referred to above. As a matter of fact, a link with zero determinant is colorable modulo any prime which makes these links quite special and deserving a separate article.

There is  a  number of articles on the topic of minimum number of colors \cite{wchengetal, Ge5, JKL, kl, klgame, lm, lopesJKTR2015, nakamuranakanishisatoh, Oshiro, Saito, satoh}. In \cite{satoh}, Satoh developed a technique for finding the minimum number of colors over a fixed modulus but on an otherwise arbitrary situation. One considers a diagram equipped with a non-trivial coloring on the given modulus and using all available colors in all possible ways. The idea is then to remove one color at a time until one cannot remove any more colors. To remove one color one assumes it shows up in the diagram in all possible ways. Specifically, we assume it shows up as the color of a monochromatic crossing and devise a procedure to eliminate that color from this monochromatic crossing; we repeat the procedure for each monochromatic crossing bearing this color. These will be called the $\alpha$ instances. Then we assume the color at stake shows up at the over-arc of a polichromatic crossing and devise a procedure to eliminate it from the over-arc; and we repeat the procedure for all polichromatic crossings with this color on the over-arc. We call these the $\beta$ instances. Finally we assume the color shows up on an under-arc connecting two crossings, and devise a procedure to eliminate it and repeat it over all such situations. Here we distinguish two cases. If the adjacent over-arcs bear distinct colors we call them $\gamma $ instances; otherwise $\delta$ instances. In each of the $\alpha, \beta, \gamma, $ and $\delta$ instances, the procedure for eliminating the color consists of performing  Reidemeister moves (accompanied by the unique rearrangement of colors that yields a coloring in the new diagram) so that the color at issue is eliminated.  The transformations or sequences of Reidemeister moves that take care of $\alpha$ instances will be called $\alpha_i$'s and analogously for the other instances. If color $c$ has been successfully eliminated in each of the $\alpha, \beta, \gamma, $ and $\delta$ instances, one moves on to color $c'$ and iterates the procedure taking into consideration this time that color $c$ is no longer there (nor the colors previously removed). Satoh applied this technique successfully in \cite{satoh} to show that mod $5$ four colors suffice. Then Oshiro \cite{Oshiro} made the first impressive use of this technique by eliminating a string of $3$ colors mod $7$ thus showing that mod $7$, $4$ colors suffice. Using the same technique, Cheng et al \cite{wchengetal} proved that at most $6$ colors are needed mod $11$, and Nakamura et al \cite{nakamuranakanishisatoh} proved further that $5$ is the minimum number of colors for any knot or link admitting non-trivial $11$-colorings. In the current article we apply Satoh's technique to prove the following result.

\begin{thm}\label{thm:main}
For any knot or link $($with non-zero determinant$)$ admitting non-trivial colorings modulo $13$, there is a  diagram of it equipped with a non-trivial coloring modulo $13$ using $5$ colors.
\end{thm}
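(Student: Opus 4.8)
The plan is to follow Satoh's color-elimination technique as outlined in the introduction, applied to the modulus $p=13$. Concretely, I would start with an arbitrary knot or link diagram $D$ carrying a non-trivial $13$-coloring. Since we are free to pass to an equivalent diagram, I may assume all $13$ colors $\{0,1,\dots,12\}$ actually occur (if fewer occur we are already done, or we recolor by an affine map to normalize). The goal is to reduce the palette from $13$ colors to $5$, i.e. to successfully eliminate $8$ of the colors one at a time. By the affine symmetry of the Fox coloring equations modulo $13$ (the maps $x\mapsto ax+b$ with $a\not\equiv 0$ preserve colorings), it suffices to exhibit one good target color to remove at each stage, and then invoke symmetry to reduce the number of genuinely distinct cases. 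I would organize the bulk of the work into the four instance types $\alpha,\beta,\gamma,\delta$ described above: for a fixed color $c$ to be removed, I must show a sequence of Reidemeister moves $\alpha_i,\beta_i,\gamma_i,\delta_i$ (each accompanied by the forced recoloring) that locally removes $c$ from, respectively, a monochromatic crossing colored $c$, a polychromatic crossing with $c$ on the over-arc, a length-one under-arc colored $c$ with distinct neighbouring over-arc colors, and the analogous configuration with equal neighbouring over-arc colors.

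The key steps, in order, would be: (1) Fix the surviving palette $S=\{s_1,\dots,s_5\}\subset\mathbb{Z}_{13}$ that I intend to end up with — a natural choice is an arithmetic-progression-like set adapted to the coloring relation $2a\equiv b+c$, since such a set is closed under the ``reflect one endpoint'' operation and hence tends to be stable under the recolorings forced by Reidemeister moves. (2) For each color $c\notin S$, and for each of the $\alpha,\beta,\gamma,\delta$ configurations, draw the relevant tangle, perform the chosen Reidemeister moves, and check that (a) a valid $13$-coloring results and (b) the color $c$ no longer appears and no previously-eliminated color is reintroduced. This is where the detailed local pictures and the arithmetic modulo $13$ live. (3) Chain these eliminations: remove colors in a carefully chosen order $c_1,c_2,\dots,c_8$ so that when handling $c_{k+1}$ the absence of $c_1,\dots,c_k$ is compatible with every move used; verify the ordering is consistent. (4) Conclude that the final diagram uses only the $5$ colors of $S$ and still carries a non-trivial coloring (non-triviality is automatic since at least two colors of $S$ must remain in use, the coloring having been non-trivial throughout and Reidemeister moves preserving non-triviality).

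The main obstacle I expect is step (2)–(3): the combinatorial explosion of local configurations, especially the $\gamma$ instances, where an under-arc colored $c$ sits between over-arcs colored $d$ and $e$ and one must push $c$ out via moves that typically introduce an auxiliary color $2d-c$ or $2e-c$; one has to guarantee this auxiliary color is already in $S$ (or is among the already-eliminated colors that we are allowed to touch only transiently). Making all these auxiliary colors land inside the target set $S$ simultaneously, for all eight removed colors and all four instance types, is the delicate bookkeeping that forces the specific choice of $S$ and the specific elimination order; getting one value wrong cascades. A secondary difficulty is the $\delta$ case and the interaction at the ends of an under-arc of length greater than one, which may require first performing moves to shorten under-arcs before the main elimination — essentially the technical lemmas that Satoh, Oshiro, and Nakamura et al.\ developed, which I would adapt rather than reprove. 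Once the table of moves is assembled and checked, the theorem follows by induction on the number of colors in the diagram, terminating at $5$.
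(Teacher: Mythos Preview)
Your overall strategy matches the paper's: Satoh's elimination technique applied through the $\alpha,\beta,\gamma,\delta$ instances, removing eight colors in a fixed order. The paper's order is $12,11,6,3,4,8,9,2$, leaving the palette $\{0,1,5,7,10\}$.

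That said, two of your working assumptions would lead you astray. First, the target set $S$ is \emph{not} arithmetic-progression-like: no $5$-term AP in $\mathbb{Z}_{13}$ supports a non-trivial coloring (this is part of the lower-bound arguments cited in the paper), and in fact there is essentially a unique affine class of workable $5$-sets, represented by $\{0,1,5,7,10\}$. Second, the affine symmetry buys you almost nothing after the first step: once a single color has been removed the situation is no longer affine-homogeneous, and each subsequent elimination must be checked against the specific set of colors already gone. The paper simply fixes the order and grinds through tables; there is no symmetry shortcut.

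The more serious gap is your implicit assumption that every elimination can be achieved by purely local moves of types $\alpha,\beta,\gamma,\delta$. In the paper this works for the first six colors, but for the last two ($9$ and $2$) the $\delta$ instances cannot be resolved locally: for example, eliminating $9$ from an under-arc between two over-arcs both colored $7$ requires a genuinely global argument. One must follow the $7$-arc until it terminates at a polychromatic crossing (which exists because the determinant is non-zero), identify which colors can appear there from the reduced palette, and then drag an arc of color $10$ (or $2$) back across the diagram via type~II moves to reach the offending $9$. Similar global manoeuvres (the paper's transformations $D_1$--$D_4$) are needed for the $\delta$ cases of color $2$, and along the way one must re-invoke earlier eliminations to clean up transient forbidden colors that these global moves produce. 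Your outline, which treats all cases as local tangle replacements and speaks of auxiliary colors being ``allowed transiently'', does not anticipate this, and without it the last two eliminations would stall.
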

\begin{cor}
If $L$ is a knot or link in the conditions of Theorem \ref{thm:main}, then
\[
mincol_{13}\, L = 5.
\]
Furthermore, since there is essentially one set of $5$ colors modulo $13$ which can color a non-trivial coloring, there is a Universal $13$-Minimal Sufficient Set of Colors, in the sense of $\cite{lopesJKTR2015}$.
\end{cor}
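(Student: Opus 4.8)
The plan is to \emph{deduce} the Corollary from Theorem \ref{thm:main} together with the known lower bounds, treating the colour-elimination argument behind Theorem \ref{thm:main} as already established. The equality $mincol_{13}\, L = 5$ splits into two inequalities. The bound $mincol_{13}\, L \le 5$ is immediate: Theorem \ref{thm:main} produces, for the link $L$ at hand, a diagram carrying a non-trivial $13$-coloring with only $5$ colors, and $mincol_{13}$ is by definition the minimum, over all diagrams of $L$, of the number of colors occurring in a non-trivial coloring. For the reverse bound $mincol_{13}\, L \ge 5$ I would invoke the general lower bound on the minimum number of colors found in the literature: specializing $mincol_p\, L \ge \lfloor \log_2 p\rfloor + 2$ (see \cite{kl}) to $p = 13$ gives $\lfloor \log_2 13\rfloor + 2 = 3 + 2 = 5$. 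Combining the two inequalities yields $mincol_{13}\, L = 5$.

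For the second assertion I would first recall from \cite{lopesJKTR2015} that a \emph{$13$-Minimal Sufficient Set of Colors} is a set $S \subseteq \mathbb{Z}_{13}$ with $|S| = mincol_{13} = 5$ that occurs as the palette of a non-trivial $13$-coloring of some diagram, and that such a set is \emph{Universal} when one fixed $S$ serves every link admitting non-trivial $13$-colorings. The structural input is that these colorings are preserved by the affine substitutions $x \mapsto \alpha x + \beta$ with $\alpha \in \mathbb{Z}_{13}^{*}$ and $\beta \in \mathbb{Z}_{13}$: the crossing relation $a + c = 2b$ is invariant under such a map and, $\alpha$ being invertible, the number of colors is unchanged. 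Hence the affine group acts on minimal sufficient sets, and the phrase ``there is essentially one set of $5$ colors'' means precisely that this action has a single orbit. I would settle that single-orbit statement by combining the concrete $5$-color palette that terminates the proof of Theorem \ref{thm:main} with a finite verification over $\mathbb{Z}_{13}$ showing that, up to affine equivalence, only one $5$-element subset occurs as the palette of a non-trivial $13$-coloring; this verification is the step I expect to be the main obstacle, since the two surrounding deductions are short whereas pinning down the orbit requires a careful case analysis.

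Granting the single-orbit statement, universality follows in a few lines. Fix a representative $S_0$ of the unique orbit. For each link $L$ in the conditions of Theorem \ref{thm:main}, the theorem furnishes a diagram $D_L$ with a non-trivial $5$-coloring $\phi_L$, whose palette $S_L$ is then a minimal sufficient set; by the single-orbit statement $S_L = g_L(S_0)$ for some affine $g_L$. Replacing $\phi_L$ by $g_L^{-1} \circ \phi_L$ gives a non-trivial coloring of the \emph{same} diagram $D_L$ with palette $g_L^{-1}(S_L) = S_0$. Thus the fixed set $S_0$ is realized on every such $L$, which is exactly the assertion that $S_0$ is a Universal $13$-Minimal Sufficient Set of Colors.
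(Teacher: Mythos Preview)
Your argument is correct and follows essentially the same route as the paper: the upper bound from Theorem~\ref{thm:main}, a cited lower bound of $5$, and the uniqueness (up to affine equivalence) of a $5$-element palette to obtain universality. The only discrepancies are bibliographic rather than mathematical: the paper attributes the lower bound $mincol_{13}\ge 5$ to \cite{nakamuranakanishisatoh, lm, JKL} rather than to the $\lfloor\log_2 p\rfloor+2$ estimate of \cite{kl}, and it simply cites \cite{Ge5} (see also \cite{lopesJKTR2015}) for the single-orbit fact instead of proposing the finite verification you outline.
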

\begin{proof}
Since it is known (\cite{nakamuranakanishisatoh, lm, JKL}) that the minimum number of colors modulo $13$ has to be at least $5$ then the equality follows from the Theorem.

Furthermore, it is also known (\cite{Ge5}, see also \cite{lopesJKTR2015}) that there is only one equivalence class of colors of cardinality $5$ modulo $13$. Then the Theorem proves that any such coloring set of $5$ colors colors any diagram which admits a minimal coloring modulo $13$.
\end{proof}

We prove Theorem \ref{thm:main} as follows. The sequence of colors to be removed is $12, 11, 6, 3, 4, 8, 9, 2$. We organize the removal of these colors into three parts. In the first one, Section \ref{sec:1211}, we remove colors $12$ and $11$ for few transformations are needed to eliminate these colors, with very few exceptions. In the second part, Section \ref{sec:6348}, we remove colors  $6, 3, 4$ and $8$. Here a considerable number of transformations is needed. Finally, in the third part, Section \ref{subsec:92},  we remove colors $9$ and $2$. Here, due to the fact that many colors have already been removed, the parameters can take on but a few values. Thus few transformations are needed in general and the really exceptional cases require a different approach.
\bigbreak
{\bf Remark} Any equality is to be understood modulo $13$, in the rest of this article.

\subsection{Acknowledgements}\label{sect:ackn}

\noindent

P.L. acknowledges support from FCT (Funda\c c\~ao para a Ci\^encia e a Tecnologia), Portugal, through project FCT EXCL/MAT-GEO/0222/2012, ``Geometry and Mathematical Physics''.

\section{Part I: Elimination of colors $12$ and $11$.}\label{sec:1211}

\bigbreak

\bigbreak
In this Section we remove colors $12$ and $11$. For each of the instances $\alpha$, $\beta$, $\gamma$, and $\delta$ there is one or two transformations that take care of most of the cases. Our strategy here is to force the unwanted situations and then prove that either they cannot occur or they are taken care of by one of the alternative transformations. Here are a couple of illustrative examples. The first one is the elimination of monochromatic crossings, with color $c=12$. Since our working coloring is non-trivial and the knot or link has non-trivial determinant, then if there are monochromatic crossings with color $c=12$, one of them has an adjacent crossing which looks like the left-hand side of transformations $\alpha_1$ (or $\alpha_2$) or $\alpha_3$, where $a\neq 12$ (Figure \ref{fig:monocrossnew}). (We know $a\neq 12$ since otherwise the coloring would be trivial.) Since $2a+1$ is our current $2a-c$ (since we assumed $c=12$) then transformations $\alpha_1$ and $\alpha_3$ guarantee the elimination of this monochromatic crossing provided $2a+1\neq 12$ since $12$ is precisely the color we are trying to remove. So forcing the unwanted situation here is to state $2a+1=12$. But the only solution is $a=12$ which as discussed above cannot occur. So we  conclude that transformations $\alpha_1$ and $\alpha_3$ suffice for eliminating the monochromatic crossings with color $12$. Now for a more elaborate example. The elimination of color $c=12$ from an over-arc of a polichromatic crossing, assuming there are no more monochromatic crossings with color $c=12$ left. The situation is depicted on the left-hand side of the top row of Figure \ref{fig:overcross0}. Thus $a\neq 12$ and if $2a+1\neq 12$ and $3a+2\neq 12$ then transformation $\beta_1$ does the job. So forcing the unwanted situations here is to state $2a+1=12$ or $3a+2=12$. But the solution to either of these equations is $a=12$ which cannot occur. We then conclude that the elimination of polichromatic crossings whose over-arcs bear color $12$ is realized by transformation $\beta_1$. Considering the left-hand side of Figure \ref{fig:overcross0} again we could argue that we also do not have to consider values of $a$ such that $24-a=12$. The fact is that our analysis of $2a+1=12$ or $3a+2=12$ allows us to disregard such details, in this case. Furthermore, it should be noted that in both situations we assumed (as we should) that $12$ was the first color to be removed i.e., no other color had already been removed. Subsequent cases might not be so simple. We now proceed  with the analysis of the other instances without this much detail.

For each of the $\alpha$, $\beta$, $\gamma$ and $\delta$ instances,  we will treat the elimination of $12$ and $11$; it will be clear to the reader that this is equivalent to treating the elimination of $12$ in the  $\alpha, \beta, \gamma, \delta$ instances followed by treating the elimination of $11$.
\begin{enumerate}
\item The $\alpha$ instance.

In order to remove color $c=12$ and $c=11$ from a monochromatic crossing, consider the transformations $\alpha_1$ in Figure \ref{fig:monocrossnew}.
\begin{figure}[!ht]
	\psfrag{alpha1}{\huge$\alpha_1$}
	\psfrag{alpha2}{\huge$\alpha_2$}
	\psfrag{alpha3}{\huge$\alpha_3$}
	\psfrag{alpha1'}{\huge$\alpha_1'$}
	\psfrag{a}{\huge$a$}
	\psfrag{c}{\huge$c$}
	\psfrag{2a-c}{\huge$2a-c$}
	\psfrag{2c-a}{\huge$2c-a$}
	\psfrag{3c-2a}{\huge$3c-2a$}
	 \centerline{\scalebox{.4}{\includegraphics{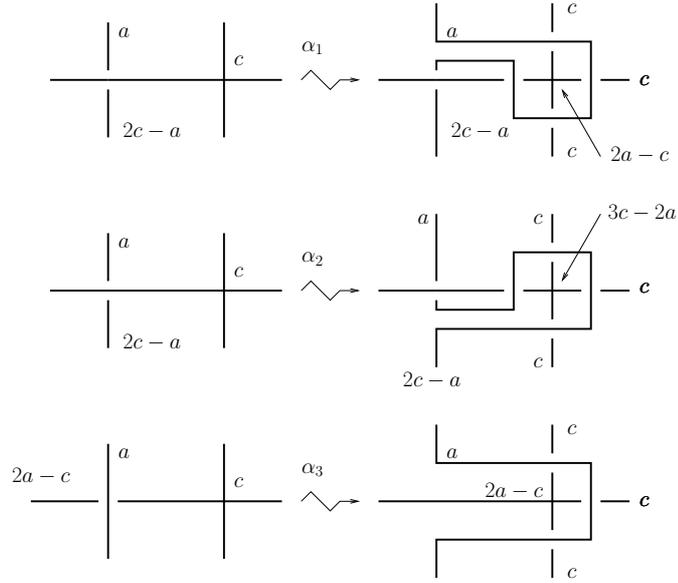}}}
	\caption{Transformation $\alpha_1$, $\alpha_2$, and $\alpha_3$. The $c$ ($2a-c$ or $3c-2a$) close to the crossing means this crossing is monochromatic with the indicated color. The details of these crossings are not specified for they do not matter for the argument. The following remark is in oreder for $\alpha_3$. Either $2a-c$ has already been removed or is currently being removed and the left hand-side of $\alpha_3$ will never materialize in our diagram. Or $2a-c$ is still available and $\alpha_3$ trivially eliminates the monochromatic crossing colored with color $c$.}\label{fig:monocrossnew}
\end{figure}
\begin{enumerate}
\item $c=12$: ditto.

\item $c=11=-2$

 Here the unwanted situations are $2a-11=11$ or $2a-11=12$. In the first case we have $a=11$ which cannot occur. In the second case we have $2a=23=10$ and so $a=5$. In this case transformation $\alpha_2$ takes care of the issue since $3c-2a=36-10=0$.
\end{enumerate}

\item The $\beta$ instance.

Consider transformation $\beta_1$ in Figure \ref{fig:overcross0}.

\begin{figure}[!ht]
	\psfrag{a}{\huge$a$}
	\psfrag{c}{\huge$c$}
	\psfrag{beta1}{\huge$\beta_1$}
	\psfrag{beta2}{\huge$\beta_2$}
	\psfrag{beta2}{\huge$\beta_2$}
	\psfrag{2a-c}{\huge$2a-c$}
	\psfrag{2c-a}{\huge$2c-a$}
	\psfrag{3a-2c}{\huge$3a-2c$}
	\psfrag{3c-2a}{\huge$3c-2a$}
	\psfrag{4c-3a}{\huge$4c-3a$}
	\psfrag{5c-4a}{\huge$5c-4a$}
	\psfrag{6c-5a}{\huge$6c-5a$}
	 \centerline{\scalebox{.4}{\includegraphics{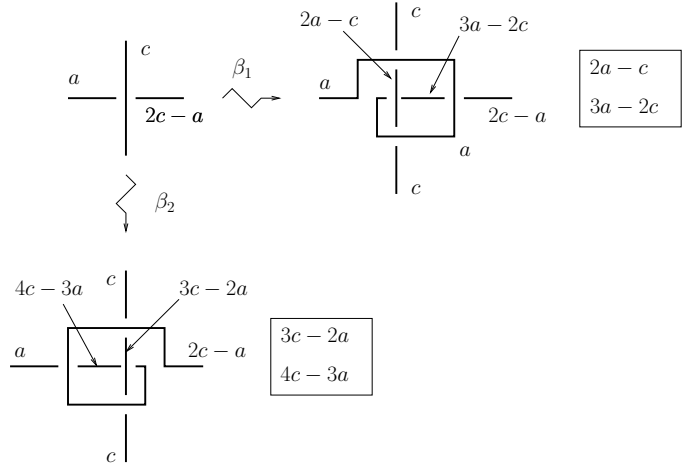}}}
	\caption{Transformations $\beta_1$ and $\beta_2$.}\label{fig:overcross0}
\end{figure}
\begin{enumerate}
\item $c=12$: ditto.

\item $c=11$.

Then if either $2a-11=11$ or $3a-22=11$ then $a=11$ which contradicts the working hypothesis. So let us look into  $2a-11=12$ and/or $3a-22=12$. The solutions are $a=5$ and $a=7$. Then $2c-a = 4$ and $2c-a = 2$, respectively, and transformation $\beta_2$ in Figure \ref{fig:overcross0} takes care of these issues.
\end{enumerate}

\item The $\gamma$ instance.

Consider transformation $\gamma_1$ in Figure \ref{fig:diffunder1bis}.

\begin{figure}[!ht]
	\psfrag{gamma1}{\huge$\gamma_1$}
	\psfrag{gamma2}{\huge$\gamma_2$}
	\psfrag{a}{\huge$a$}
	\psfrag{b}{\huge$b$}
	\psfrag{c}{\huge$c$}
	\psfrag{2b-c}{\huge$2b-c$}
	\psfrag{2b-a}{\huge$2b-a$}
	\psfrag{2a-b}{\huge$2a-b$}
	\psfrag{2a-c}{\huge$2a-c$}
	\psfrag{2b-2a+c}{\huge$2b-2a+c$}
	\psfrag{2a-2b+c}{\huge$2a-2b+c$}
	 \centerline{\scalebox{.4}{\includegraphics{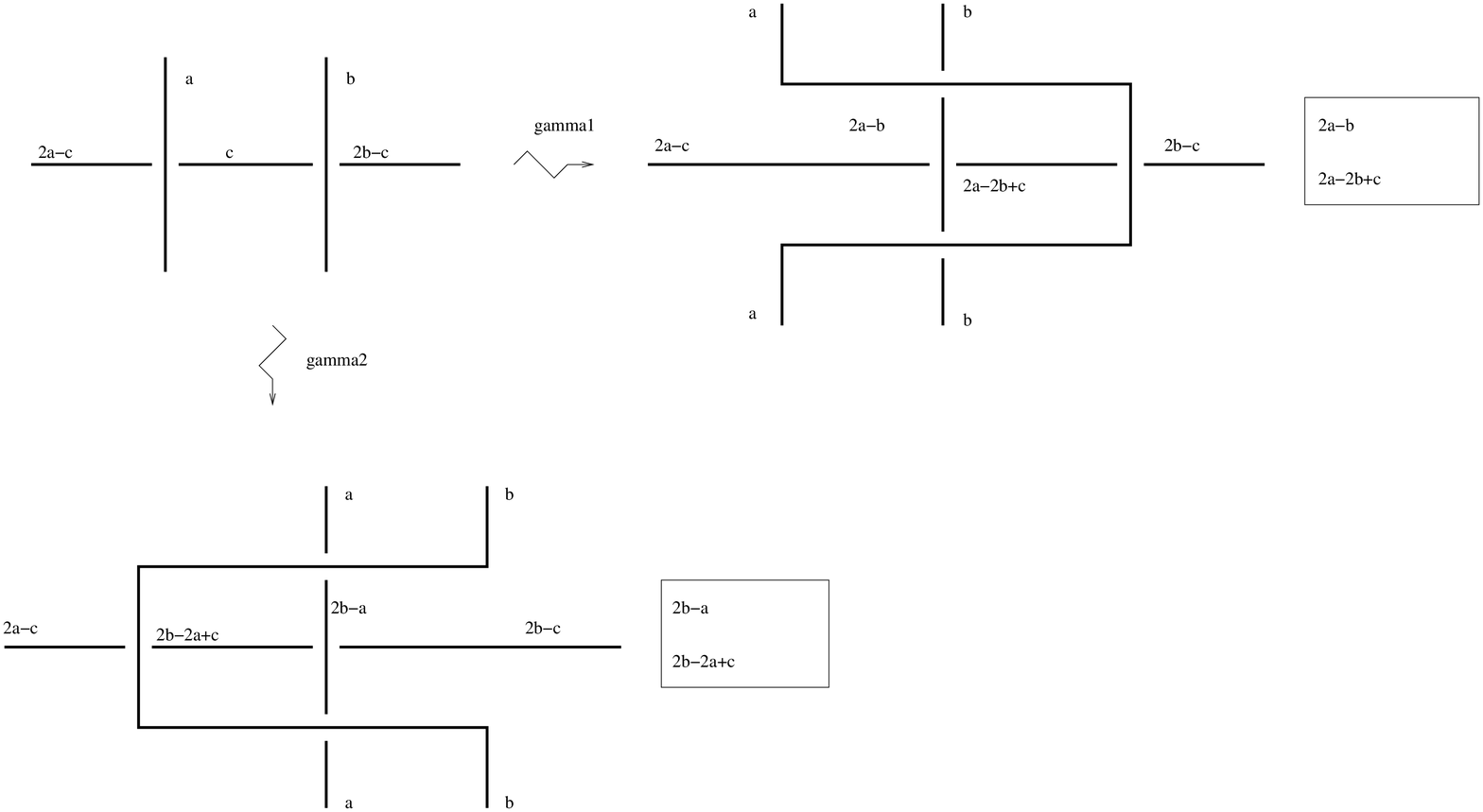}}}
	\caption{Transformations  $\gamma_1$ and $\gamma_2$.}\label{fig:diffunder1bis}
\end{figure}
\begin{enumerate}
\item $c=12$

Then $2a-2b+c=2a-2b-1$ and if $2a-2b-1=-1$ then $2(a-b)=0$ and so $a=b$ which is contrary to the assumptions.

If $2a-b=-1$ then $b=2a+1$ and let us now see the implications of this equality when considering transformation $\gamma_2$ (Figure \ref{fig:diffunder1bis}). Then $2b-a = 2(2a+1)-a=3a+2$. If $3a+2=12$ then $a=-1$ which is contrary to our assumptions. Also, $2b-2a-1=2a+1$ and if $2a+1=12$ then $a=12$ which is contrary to our assumptions. We conclude that, for $c=12$, transformation $\gamma_2$ solves the issues that are not solved by transformation $\gamma_1$.

\begin{figure}[!ht]
	\psfrag{gamma4}{\huge$\gamma_4$}
	\psfrag{gamma3}{\huge$\gamma_3$}
	\psfrag{a}{\huge$a$}
	\psfrag{b}{\huge$b$}
	\psfrag{c}{\huge$c$}
	\psfrag{2b-c}{\huge$2b-c$}
	\psfrag{2a-c}{\huge$2a-c$}
	\psfrag{3a-2c}{\huge$3a-2c$}
	\psfrag{3b-2c}{\huge$3b-2c$}
	\psfrag{4a-3c}{\huge$4a-3c$}
	\psfrag{4b-3c}{\huge$4b-3c$}
	\psfrag{4b-2c-a}{\huge$4b-2c-a$}
	\psfrag{4a-2c-b}{\huge$4a-2c-b$}
	\psfrag{4a-2b-c}{\huge$4a-2b-c$}
	\psfrag{4b-2a-c}{\huge$4b-2a-c$}
	 \centerline{\scalebox{.4}{\includegraphics{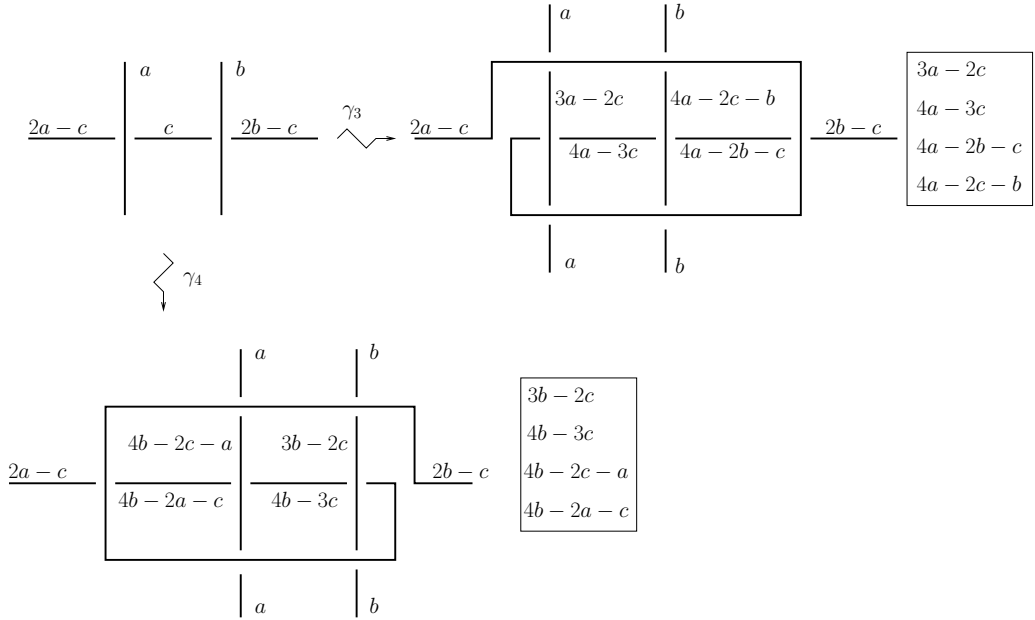}}}
	\caption{Transformations $\gamma_3$ and $\gamma_4$.}\label{fig:diffunder0bis}
\end{figure}

\item $c=11$

If $2a-b=-1$ then $b=2a+1$.

\begin{enumerate}
\item Then $2b-a=3a+2$ and if $3a+2=-1$ then $a=-1$ which contradicts the assumptions; if $3a+2=-2$ then $a=3$ and $b=2a+1=7$. This $(a, b)=(3, 7)$ case is dealt with with transformation $\gamma_3$ (Figure \ref{fig:diffunder0bis}).
\item On the other hand, $2b-2a-2=2a$. If $2a=-2$ then $a=-1$ contrary to the assumptions. If $2a=12$ then $a=6$ and so $b=2a+1=0$. The case $(a, b)=(6, 0)$ is dealt with with transformation $\gamma_3$ (Figure \ref{fig:diffunder0bis}).
\end{enumerate}

If $2a-b=-2$ then $b=2a+2$

\begin{enumerate}
\item Then $2b-a=3a+4$. So if $3a+4=-2$ then $a=-2$, contrary to the assumptions. If $3a+4=-1$ then $a=7$ and $b=2a+2=3$. This $(a, b)=(7, 3)$ case is dealt with with transformation $\gamma_4$ (Figure \ref{fig:diffunder0bis}).
\item On the other hand, $2b-2a-2=2a+2$ and $2a+2$ cannot take on $12$ or $11$ since it is part of the left-hand side of the transformations $\gamma_i$.
    \end{enumerate}

Now for $2a-2b+c = 2a-2b-2$. If $2a-2b-2=-2$ then $a=b$ which is contrary to the assumptions. If $2a-2b-2=-1$ then $a=b+7$.

\begin{enumerate}
\item Then $2b-a = b-7$. If $b-7 = -2$ then $b=5$ so that $2b+2=12$ which is contrary to the assumptions. If $b-7=-1$ then $b=6$ so that $a=b+7=0$. The case $(a, b)=(0, 6)$ is dealt with with transformation $\gamma_4$ (Figure \ref{fig:diffunder0bis}).
\item On the other hand, $2b-2a-2=-14-2=10$ which does not raise any objections.
\end{enumerate}

\item The $\delta$ instance.

Consider transformation $\delta_1$ in Figure \ref{fig:sameunder0}.
\begin{figure}[!ht]
	\psfrag{delta1}{\huge$\delta_{1}$}
	\psfrag{delta3}{\huge$\delta_3$}
	\psfrag{a}{\huge$a$}
	\psfrag{c}{\huge$c$}
	\psfrag{2a-c}{\huge$2a-c$}
	\psfrag{3a-2c}{\huge$3a-2c$}
	\psfrag{4a-3c}{\huge$4a-3c$}
	\psfrag{7a-6c}{\huge$7a-6c$}
	\psfrag{6a-5c}{\huge$6a-5c$}
	\psfrag{5a-4c}{\huge$5a-4c$}
\centerline{\scalebox{.4}{\includegraphics{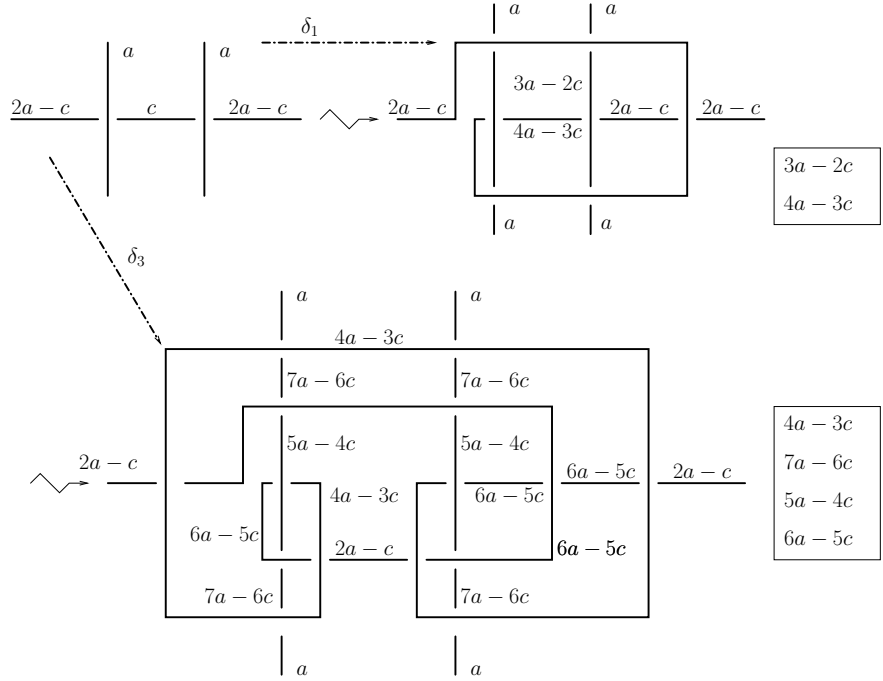}}}
	\caption{Transformations $\delta_1$ and $\delta_3$.}\label{fig:sameunder0}
\end{figure}

\begin{enumerate}

\item $c=12$.

Then $3a-2c=3a+2$ and if $3a+2=-1$ then $a=-1$ which contradicts the assumption. Also $4a-3c=4a+3$ and if $4a+3=-1$ then $a=-1$ which contradicts the assumption.

\item $c=11$.

Then $3a-2c=3a+4$. If $3a+4=-2$ then $a=-2$ which contradicts the assumptions. If $3a+4=-1$ then $a=7$ and this case is dealt with with transformation $\delta_3$ (Figure \ref{fig:sameunder0}).

Also, $4a-3c=4a+6$. If $4a+6=-2$ then $a=-2$ which contradicts the assumptions. If $4a+6=-1$ then $a=8$ and this case is dealt with with transformation $\delta_2$ (Figure \ref{fig:sameunder3}).
\end{enumerate}

\begin{figure}[!ht]
	\psfrag{delta1}{\huge$\delta_{1}$}
	\psfrag{delta2}{\huge$\delta_{2}$}
	\psfrag{a}{\huge$a$}
	\psfrag{c}{\huge$c$}
	\psfrag{2a-c}{\huge$2a-c$}
	\psfrag{3a-2c}{\huge$3a-2c$}
	\psfrag{5c-4a}{\huge$5c-4a$}
	\psfrag{6c-5a}{\huge$6c-5a$}
	\psfrag{3c-2a}{\huge$3c-2a$}
	\psfrag{7c-6a}{\huge$7c-6a$}
	 \centerline{\scalebox{.4}{\includegraphics{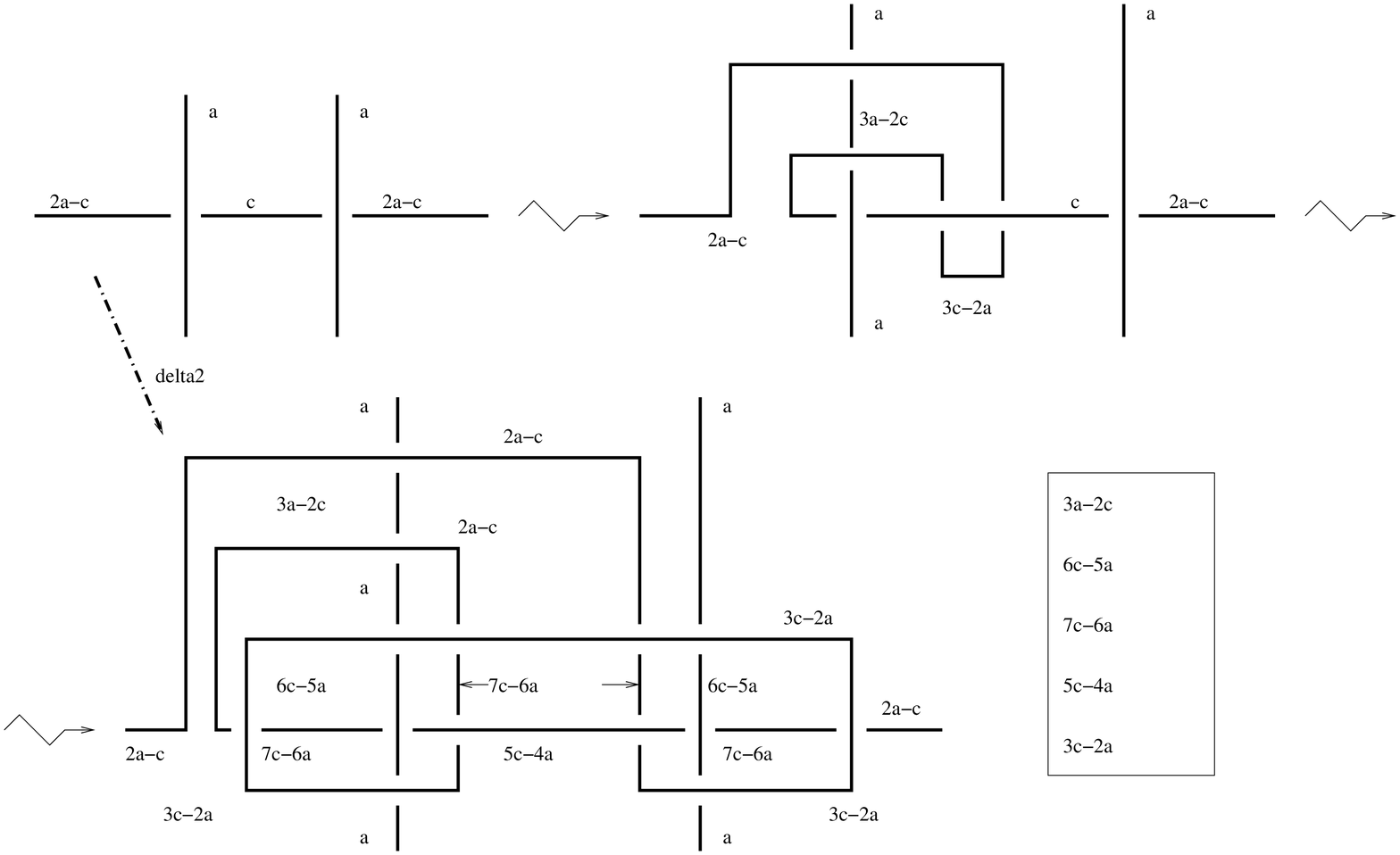}}}
	\caption{Transformation $\delta_2$.}\label{fig:sameunder3}
\end{figure}

\end{enumerate}
\end{enumerate}



\bigbreak

\bigbreak

\bigbreak

\bigbreak

\section{Part II: Elimination of colors $6, 3, 4$ and $8$.}\label{sec:6348}

\noindent

In this Section we eliminate colors $6, 3, 4$ and $8$. Since we now have already removed colors $12$ and $11$, our technique of forcing the unwanted situations would require dealing with a significant number of cases. In addition to that, the number of transformations needed is larger than before, as we noted in the course of our work. Therefore we change the strategy and simply supply a list of extra transformations for each of the $\beta$, $\gamma$, and $\delta$ instances (since for the $\alpha$ instances we do not need any more transformations) and display tables from which the relevant information can be read off. These tables have $2$ rows for the $\alpha, \beta,$ and $\delta$ instances. For these instances, each of these tables concern the elimination of a color $c$ and there is only one other parameter, $a$, which can take on the values of the colors still available. The first row displays the different values $a$ can take on; the second row displays the index of the transformation that settles the issue corresponding to the values of $a$ right above this index. Here is one illustrative example. In Table \ref{Ta:6beta}, the fourth entry in the top row is $3$ and the fourth entry in the bottom row is $1$. This tells us that the elimination of color $6$ from a polichromatic crossing whose over-arc bears color $6$ and one of the under-arcs bears color $3$ is realized with transformation $\beta_1$ (see Figure \ref{fig:overcross0}). In the same Table, the first entry in the top row is $0$ and the first entry in the bottom row is $X$. This tells us that a polichromatic crossing whose over-arc bears color $6$ and one of the under-arcs bears color $0$ cannot occur since the other under-arc would then bear color $2\cdot 6 - 0 = 12$ which has already been removed. There is a slightly different interpretation of $X$'s in the tables for the $\alpha$ transformations. Since $\alpha_3$ is trivially accomplished, we reserve the $X$'s for the situations involving the $\alpha_1$ or $\alpha_2$ transformations where $2c-a$ has already been removed or is currently being removed.

Now for the Tables concerning the $\gamma$ instances. These instances involve the elimination of a color $c$ in a situation involving two parameters, $a$ and $b$. These Tables should then be regarded as stacks of two consecutive rows. For the first pair of rows, the first row has to do with the first value parameter $a$ can take on; the different columns correspond to the different values parameter $b$ can take on subject to the given value of parameter $a$ (note that $a\neq b$ for the $\gamma$ transformations). The second row of this first pair of rows displays, in general, the indices of the transformations that settle the issue corresponding to the the value of $b$ right above it along with the given value of $a$. Analogously for the other pairs of rows. An analogous interpretation applies to the $X$'s as before. They correspond to $2a-c$ or $2b-c$ taking on colors which have already been eliminated or are being eliminated.

The reader will have to check that the linear combinations associated to a given transformation do not evaluate to one of the colors already eliminated or being eliminated. This seems preferable to obstructing the article with evaluating all these expressions - which the reader would still have to check.


\begin{figure}[!ht]
	\psfrag{a}{\huge$a$}
	\psfrag{c}{\huge$c$}
	\psfrag{2a-c}{\huge$2a-c$}
	\psfrag{2c-a}{\huge$2c-a$}
	\psfrag{3a-2c}{\huge$3a-2c$}
	\psfrag{4c-3a}{\huge$4c-3a$}
	\psfrag{5c-4a}{\huge$5c-4a$}
	\psfrag{6c-5a}{\huge$6c-5a$}
	\psfrag{9a-8c}{\huge$9a-8c$}
	\psfrag{5a-4c}{\huge$5a-4c$}
	\psfrag{3c-2a}{\huge$3c-2a$}
	\psfrag{9c-8a}{\huge$9c-8a$}
	\psfrag{8c-7a}{\huge$8c-7a$}
	\psfrag{16c-15a}{\huge$16c-15a$}
	\psfrag{12c-11a}{\huge$12c-11a$}
	\psfrag{22c-21a}{\huge$22c-21a$}
	\psfrag{15c-14a}{\huge$15c-14a$}
	\psfrag{0}{\huge$0$}
	\psfrag{1}{\huge$1$}
	\psfrag{2}{\huge$2$}
	\psfrag{3}{\huge$3$}
	\psfrag{4}{\huge$4$}
	\psfrag{5}{\huge$5$}
	\psfrag{6}{\huge$6$}
	\psfrag{11}{\huge$\mathbf{11}$}
	 \centerline{\scalebox{.4}{\includegraphics{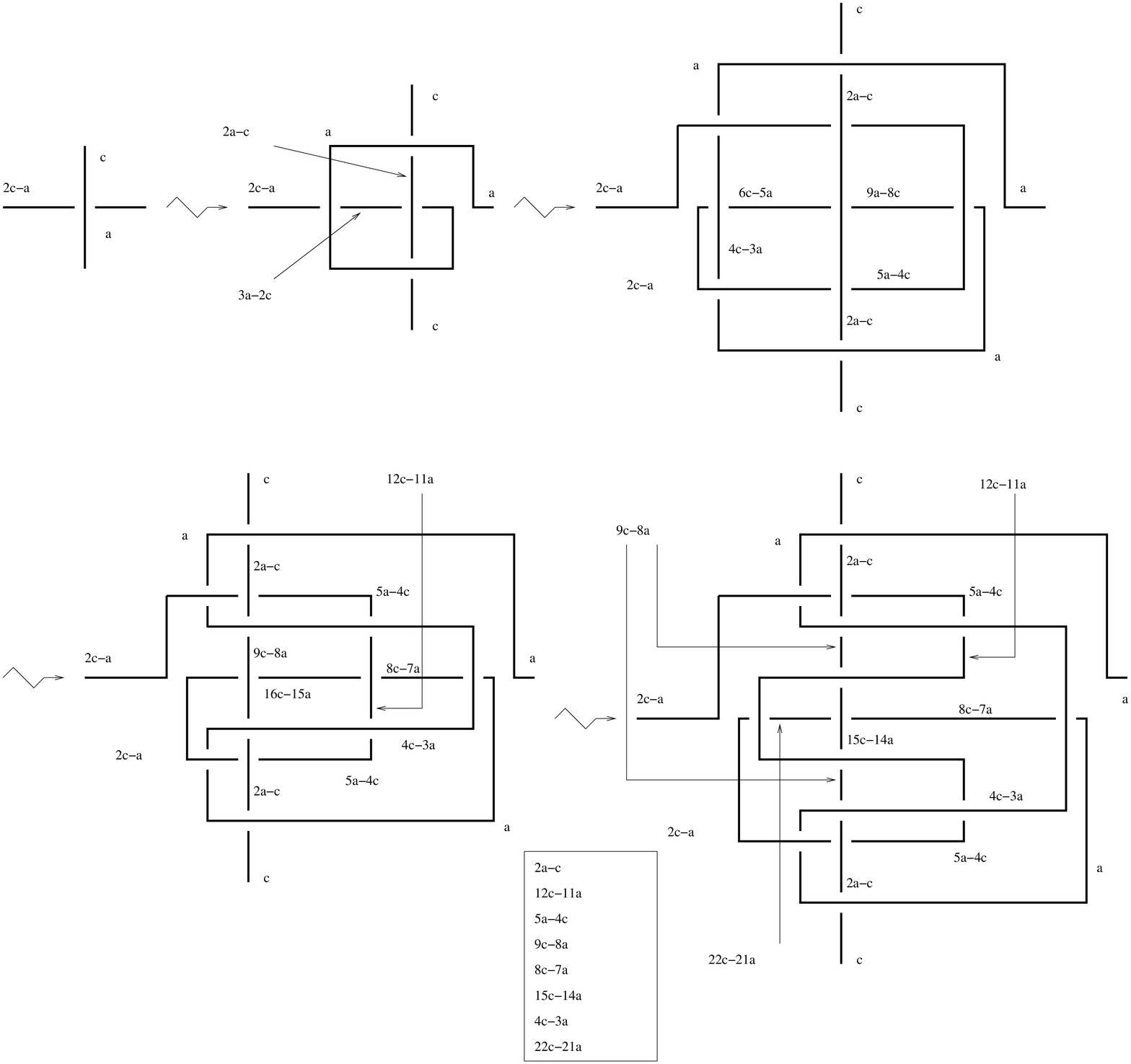}}}
	\caption{Transformation $\beta_3$: from the leftmost diagram in the upper row to the rightmost diagram in the lower row.}\label{fig:overcross15}
\end{figure}










\begin{figure}[!ht]
	\psfrag{gamma5}{\huge$\gamma_5$}
	\psfrag{gamma6}{\huge$\gamma_6$}
	\psfrag{a}{\huge$a$}
	\psfrag{b}{\huge$b$}
	\psfrag{c}{\huge$c$}
	\psfrag{2b-c}{\huge$2b-c$}
	\psfrag{2b-a}{\huge$2b-a$}
	\psfrag{2b-2a+c}{\huge$2b-2a+c$}
	\psfrag{2a-b}{\huge$2a-b$}
	\psfrag{2a-2b+c}{\huge$2a-2b+c$}
	\psfrag{2a-c}{\huge$2a-c$}
	\psfrag{2c-a}{\huge$2c-a$}
	\psfrag{3b-2a}{\huge$3b-2a$}
	\psfrag{4b-4a+c}{\huge$4b-4a+c$}
	\psfrag{3a-2b}{\huge$3a-2b$}
	\psfrag{4a-4b+c}{\huge$4a-4b+c$}
	 \centerline{\scalebox{.4}{\includegraphics{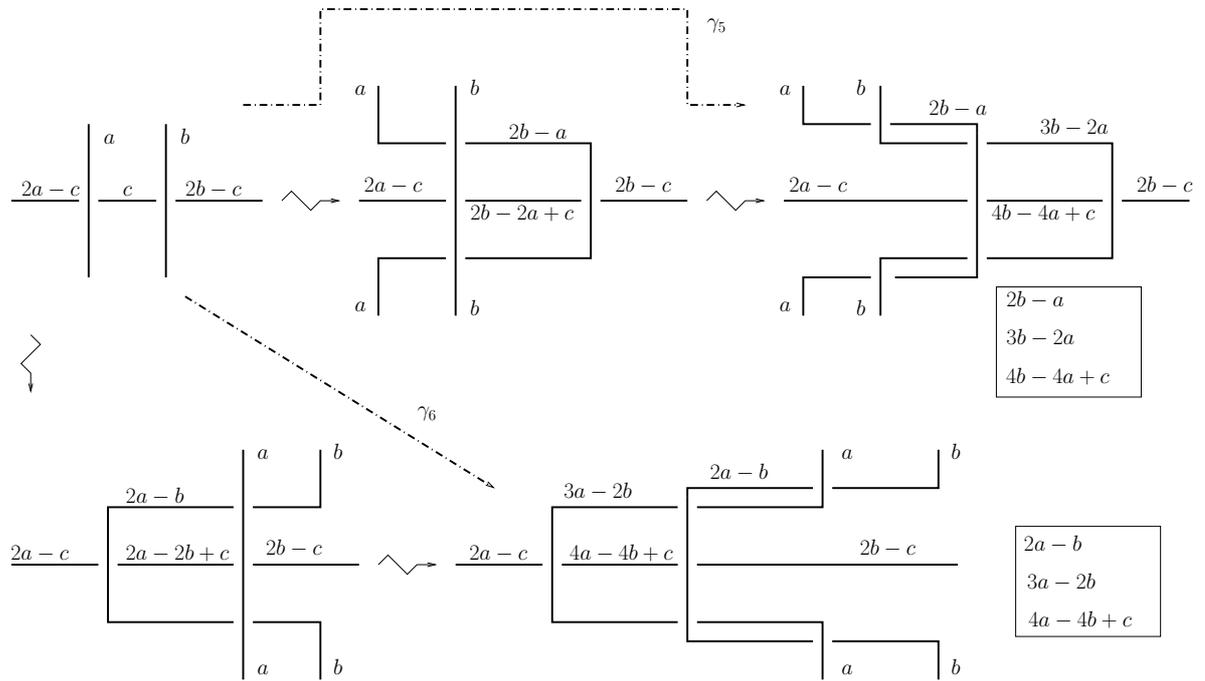}}}
	\caption{Transformations $\gamma_5$ and $\gamma_6$.}\label{fig:diffunder2}
\end{figure}



\begin{figure}[!ht]
	\psfrag{gamma7}{\huge$\gamma_7$}
	\psfrag{gamma6}{\huge$\gamma_6$}
	\psfrag{a}{\huge$a$}
	\psfrag{b}{\huge$b$}
	\psfrag{c}{\huge$c$}
	\psfrag{2b-c}{\huge$2b-c$}
	\psfrag{2b-a}{\huge$2b-a$}
	\psfrag{2a-b}{\huge$2a-b$}
	\psfrag{2a-2b+c}{\huge$2a-2b+c$}
	\psfrag{2a-2c+b}{\huge$2a-2c+b$}
	\psfrag{2a-3c+2b}{\huge$2a-3c+2b$}
	\psfrag{2a-c}{\huge$2a-c$}
	\psfrag{2c-a}{\huge$2c-a$}
	\psfrag{3a-2c}{\huge$3a-2c$}
	\psfrag{4a-3c}{\huge$4a-3c$}
	\psfrag{4a-2c-b}{\huge$4a-2c-b$}
	\psfrag{4a-2b-c}{\huge$4a-2b-c$}
	 \centerline{\scalebox{.4}{\includegraphics{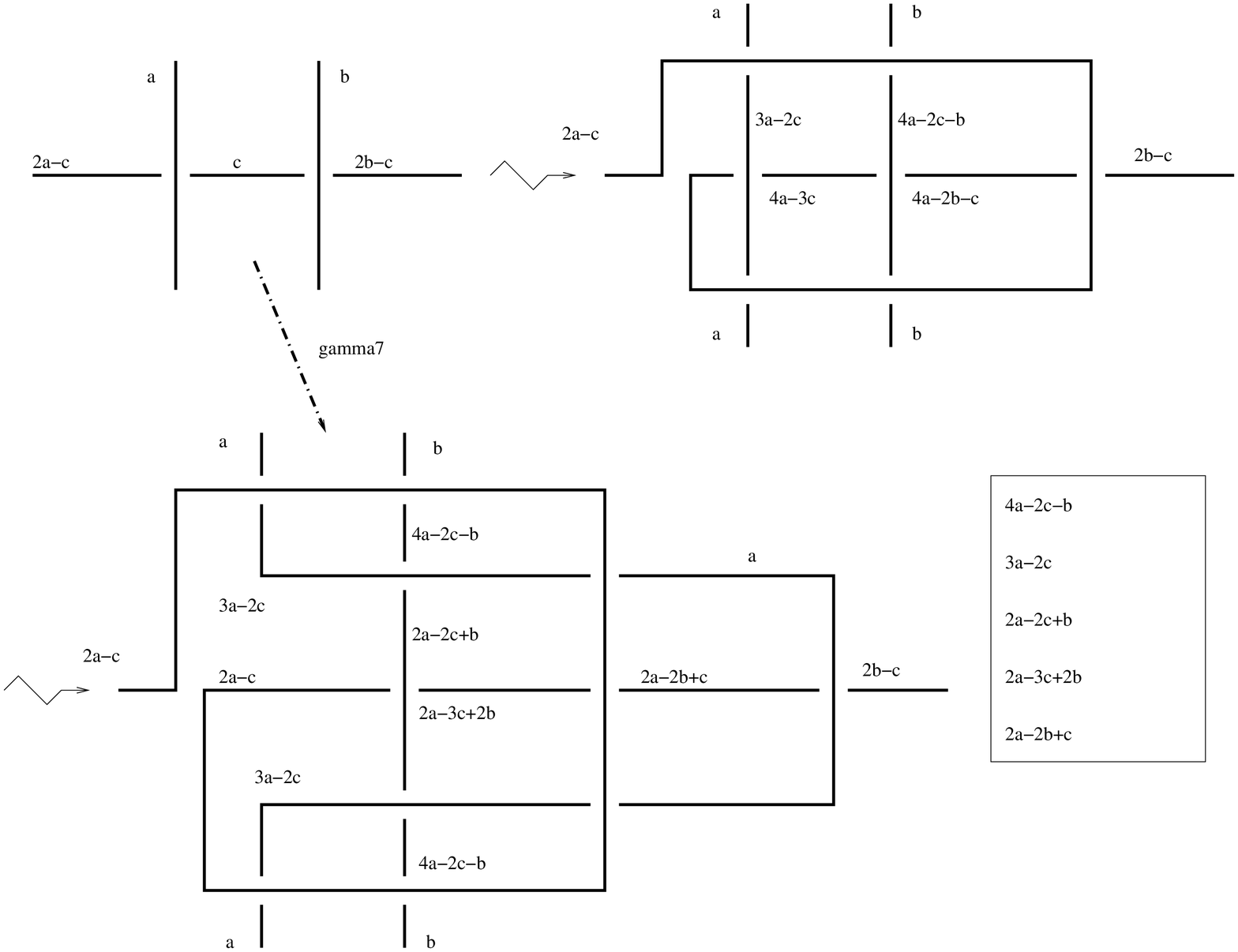}}}
	\caption{Transformation  $\gamma_7$.}\label{fig:diffunder7}
\end{figure}




\begin{figure}[!ht]
	\psfrag{gamma19}{\huge$\gamma_{10}$}
	\psfrag{gamma6}{\huge$\gamma_6$}
	\psfrag{a}{\huge$a$}
	\psfrag{b}{\huge$b$}
	\psfrag{c}{\huge$c$}
	\psfrag{2b-c}{\huge$2b-c$}
	\psfrag{2a-c}{\huge$2a-c$}
	\psfrag{4a-3c}{\huge$4a-3c$}
	\psfrag{3a-2c}{\huge$3a-2c$}
	\psfrag{4a-b-2c}{\huge$4a-b-2c$}
	\psfrag{4a-2b-c}{\huge$4a-2b-c$}
	\psfrag{6a-2b-3c}{\huge$6a-2b-3c$}
	\psfrag{8a-4b-3c}{\huge$8a-4b-3c$}
	\psfrag{5a-2b-2c}{\huge$5a-2b-2c$}
	\psfrag{10a-6b-3c}{\huge$10a-6b-3c$}
	\psfrag{4b-2a-c}{\huge$4b-2a-c$}
\centerline{\scalebox{.4}{\includegraphics{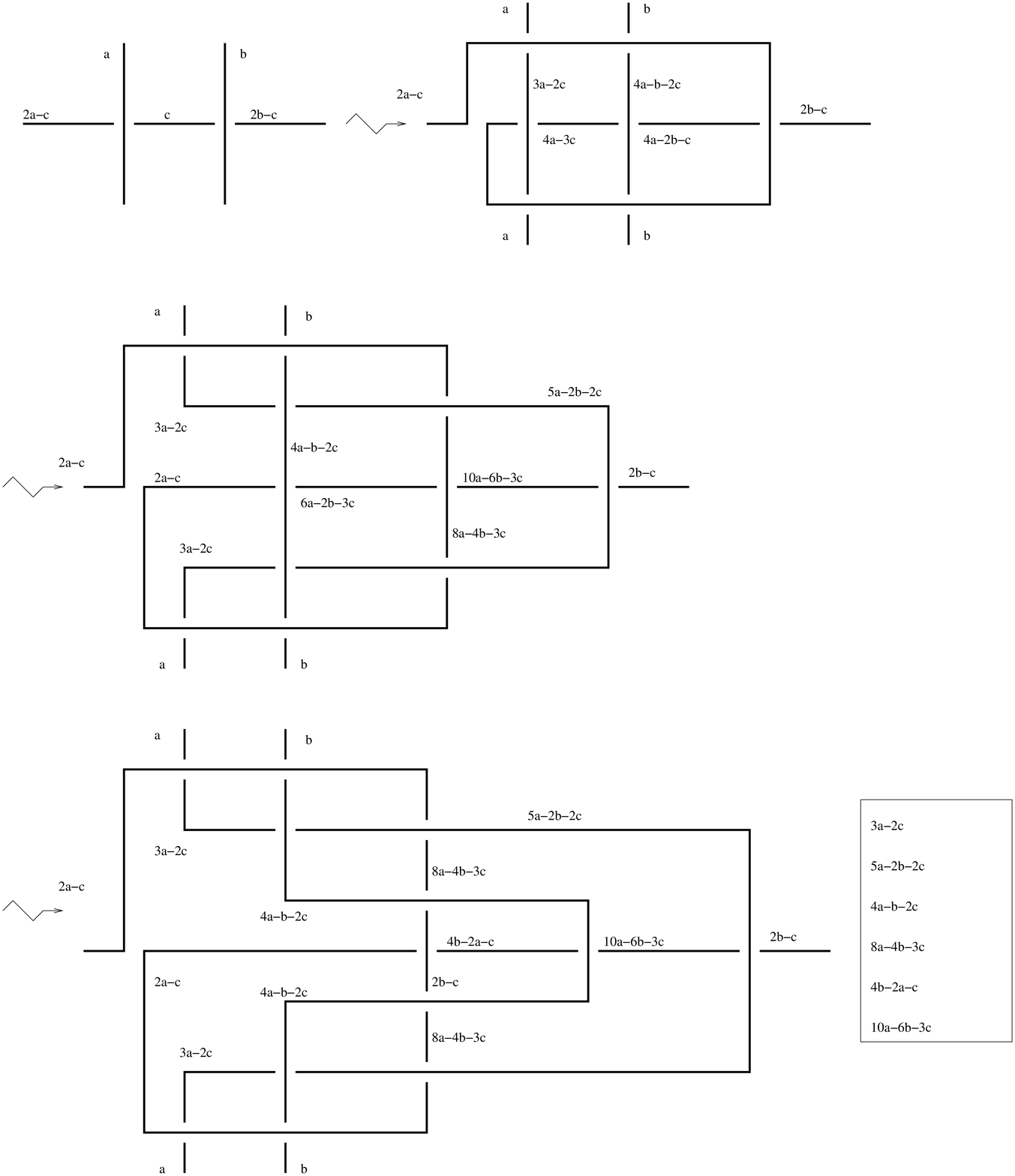}}}
	\caption{Transformation $\gamma_{8}$.}\label{fig:diffunder77}
\end{figure}

\begin{figure}[!ht]
	\psfrag{gamma15}{\huge$\gamma_{9}$}
	\psfrag{gamma6}{\huge$\gamma_6$}
	\psfrag{a}{\huge$a$}
	\psfrag{b}{\huge$b$}
	\psfrag{c}{\huge$c$}
	\psfrag{2b-c}{\huge$2b-c$}
	\psfrag{8b-4c-3a}{\huge$8b-4c-3a$}
	\psfrag{2a-c}{\huge$2a-c$}
	\psfrag{3b-2c}{\huge$3b-2c$}
	\psfrag{4b-3c}{\huge$4b-3c$}
	\psfrag{4a-3c}{\huge$4a-3c$}
	\psfrag{6b-3c-2a}{\huge$6b-3c-2a$}
	\psfrag{5b-2c-2a}{\huge$5b-2c-2a$}
	\psfrag{4b-2c-a}{\huge$4b-2c-a$}
	\psfrag{4b-2a-c}{\huge$4b-2a-c$}
	\psfrag{4a-2b-c}{\huge$4a-2b-c$}
	\psfrag{11b-6c-4a}{\huge$11b-6c-4a$}
\centerline{\scalebox{.4}{\includegraphics{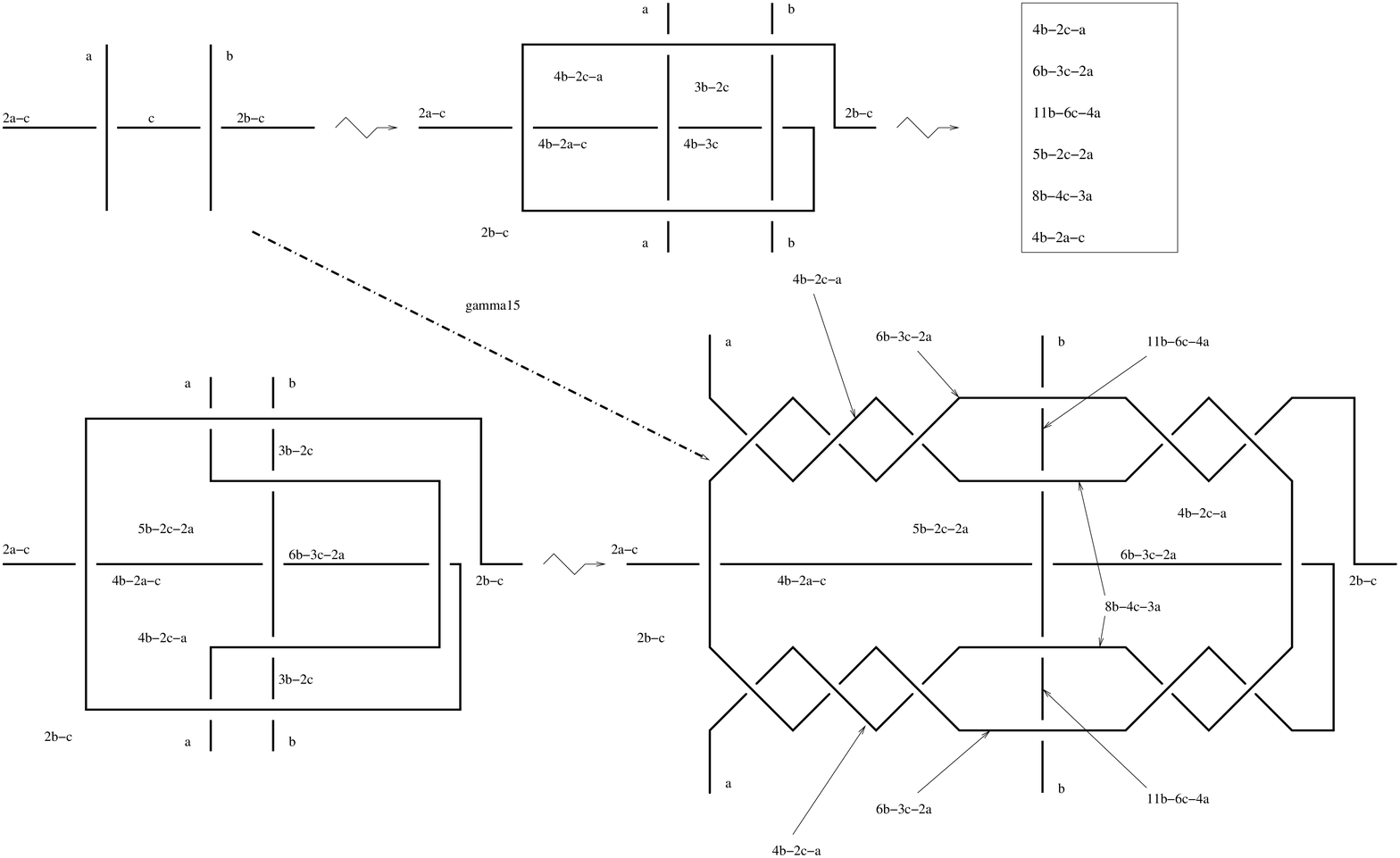}}}
	\caption{Transformation $\gamma_9$.}\label{fig:diffunder15}
\end{figure}



\begin{figure}[!ht]
	\psfrag{gamma19}{\huge$\gamma_{10}$}
	\psfrag{gamma6}{\huge$\gamma_6$}
	\psfrag{a}{\huge$a$}
	\psfrag{b}{\huge$b$}
	\psfrag{c}{\huge$c$}
	\psfrag{2b-c}{\huge$2b-c$}
	\psfrag{2a-c}{\huge$2a-c$}
	\psfrag{2b-2a+c}{\huge$2b-2a+c$}
	\psfrag{2b-4a+3c}{\huge$2b-4a+3c$}
	\psfrag{3a-2c}{\huge$3a-2c$}
	\psfrag{4b-6a+3c}{\huge$4b-6a+3c$}
	\psfrag{4b-8a+5c}{\huge$4b-8a+5c$}
	\psfrag{4b-7a+4c}{\huge$4b-7a+4c$}
	\psfrag{3b-4a+2c}{\huge$3b-4a+2c$}
	\psfrag{4a-3c}{\huge$4a-3c$} \centerline{\scalebox{.4}{\includegraphics{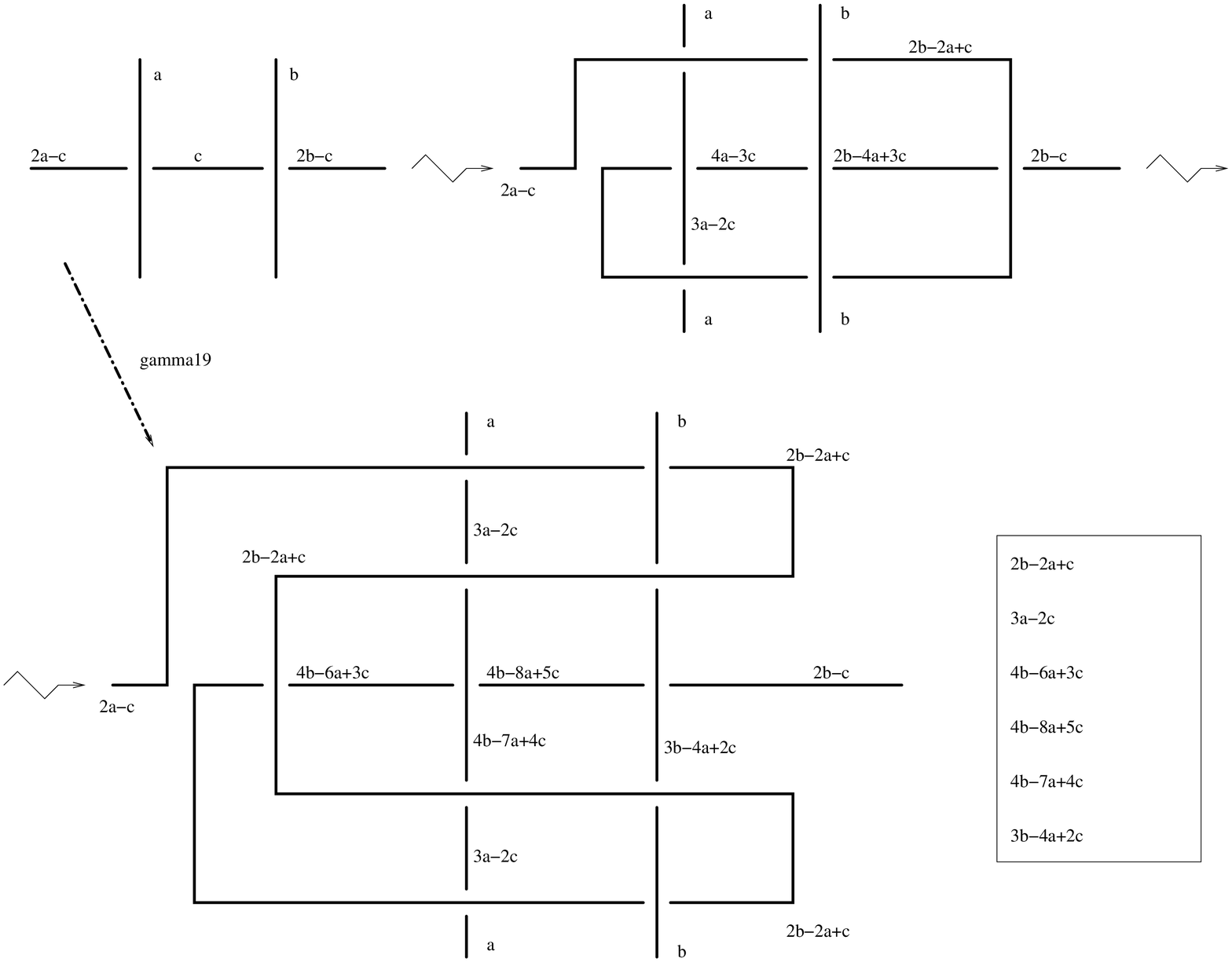}}}
	\caption{Transformation $\gamma_{10}$.}\label{fig:diffunder19}
\end{figure}



\begin{figure}[!ht]
	\psfrag{a}{\huge$a$}
	\psfrag{b}{\huge$b$}
	\psfrag{c}{\huge$c$}
	\psfrag{2b-c}{\huge$2b-c$}
	\psfrag{2b-a}{\huge$2b-a$}
	\psfrag{2b-2a+c}{\huge$2b-2a+c$}
	\psfrag{2b-4a+3c}{\huge$2b-4a+3c$}
	\psfrag{2b-3a+2c}{\huge$2b-3a+2c$}
	\psfrag{4b-5a+2c}{\huge$4b-5a+2c$}
	\psfrag{3b-4a+2c}{\huge$3b-4a+2c$}
	\psfrag{2a-c}{\huge$2a-c$} \centerline{\scalebox{.4}{\includegraphics{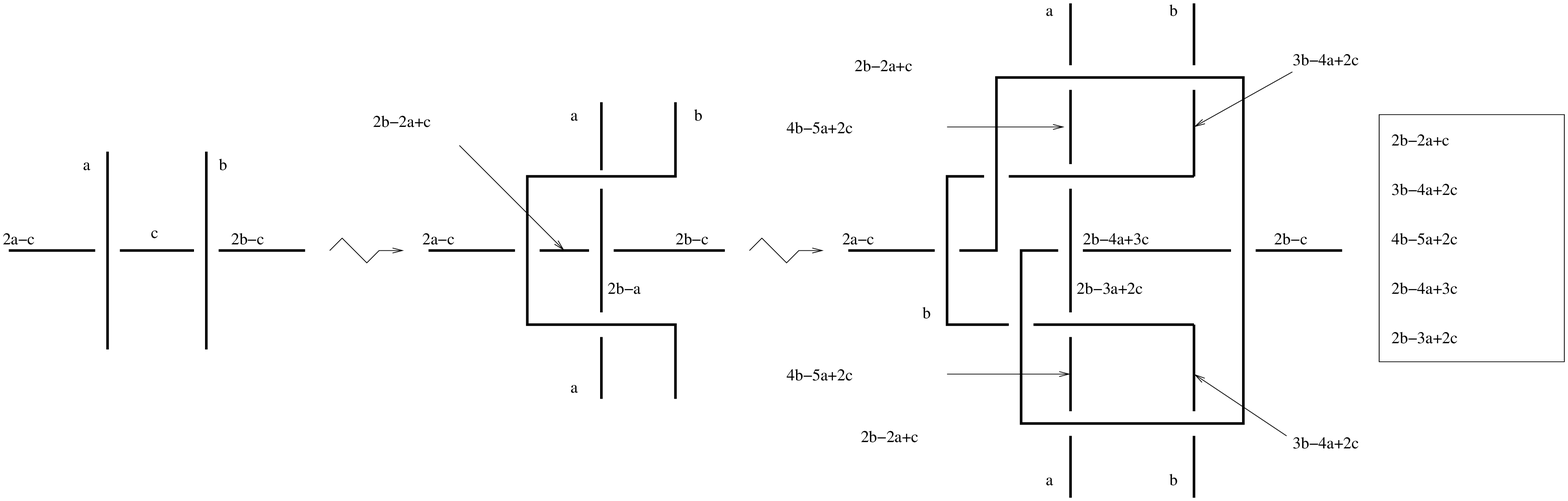}}}
	\caption{Transformation $\gamma_{11}$.}\label{fig:diffunder24}
\end{figure}



\begin{figure}[!ht]
	\psfrag{a}{\huge$a$}
	\psfrag{b}{\huge$b$}
	\psfrag{c}{\huge$c$}
	\psfrag{2b-c}{\huge$2b-c$}
	\psfrag{2b-a}{\huge$2b-a$}
	\psfrag{3c-2a}{\huge$3c-2a$}
	\psfrag{2b-2a+c}{\huge$2b-2a+c$}
	\psfrag{2b-4a+3c}{\huge$2b-4a+3c$}
	\psfrag{4b-5a+2c}{\huge$4b-5a+2c$}
	\psfrag{4b-5c+2a}{\huge$4b-5c+2a$}
	\psfrag{3b-4a+2c}{\huge$3b-4a+2c$}
	\psfrag{2a-c}{\huge$2a-c$}
	\psfrag{2c-a}{\huge$2c-a$}
	\psfrag{3a-2c}{\huge$3a-2c$}
	\psfrag{4c-3a}{\huge$4c-3a$}
	\psfrag{4a-3c}{\huge$4a-3c$}
	\psfrag{5c-4a}{\huge$5c-4a$}
	\psfrag{6c-5a}{\huge$6c-5a$}
	\psfrag{17c-10a-6b}{\huge$17c-10a-6b$}
	\psfrag{13c-8a-4b}{\huge$13c-8a-4b$}
	\psfrag{8c-5a-2b}{\huge$8c-5a-2b$}
	\psfrag{9c-6a-2b}{\huge$9c-6a-2b$}
	\psfrag{6c-4a-b}{\huge$6c-4a-b$}
	\psfrag{7c-4a-2b}{\huge$7c-4a-2b$}
	\psfrag{7c-6a}{\huge$7c-6a$}
	 \centerline{\scalebox{.4}{\includegraphics{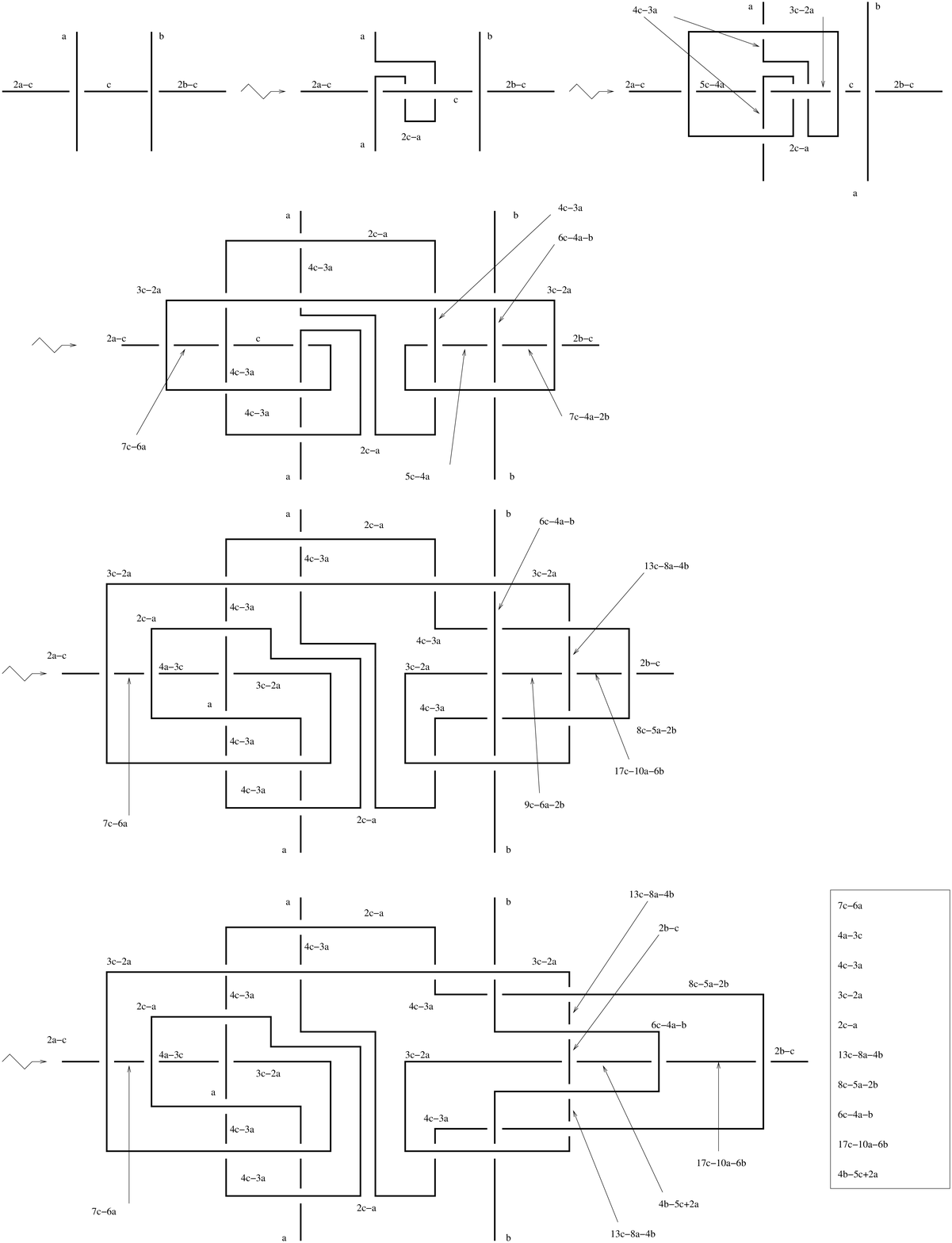}}}
	\caption{Transformation $\gamma_{12}$.}\label{fig:diffunder26}
\end{figure}

\begin{figure}[!ht]
	\psfrag{gamma19}{\huge$\gamma_{10}$}
	\psfrag{gamma6}{\huge$\gamma_6$}
	\psfrag{a}{\huge$a$}
	\psfrag{b}{\huge$b$}
	\psfrag{c}{\huge$c$}
	\psfrag{2b-c}{\huge$2b-c$}
	\psfrag{2a-c}{\huge$2a-c$}
	\psfrag{2b-2a+c}{\huge$2b-2a+c$}
	\psfrag{2b-4a+3c}{\huge$2b-4a+3c$}
	\psfrag{3a-2c}{\huge$3a-2c$}
	\psfrag{4b-6a+3c}{\huge$4b-6a+3c$}
	\psfrag{4b-8a+5c}{\huge$4b-8a+5c$}
	\psfrag{4b-7a+4c}{\huge$4b-7a+4c$}
	\psfrag{3b-4a+2c}{\huge$3b-4a+2c$}
	\psfrag{6b-12a+7c}{\huge$6b-12a+7c$}
	\psfrag{4a-3c}{\huge$4a-3c$}
\centerline{\scalebox{.4}{\includegraphics{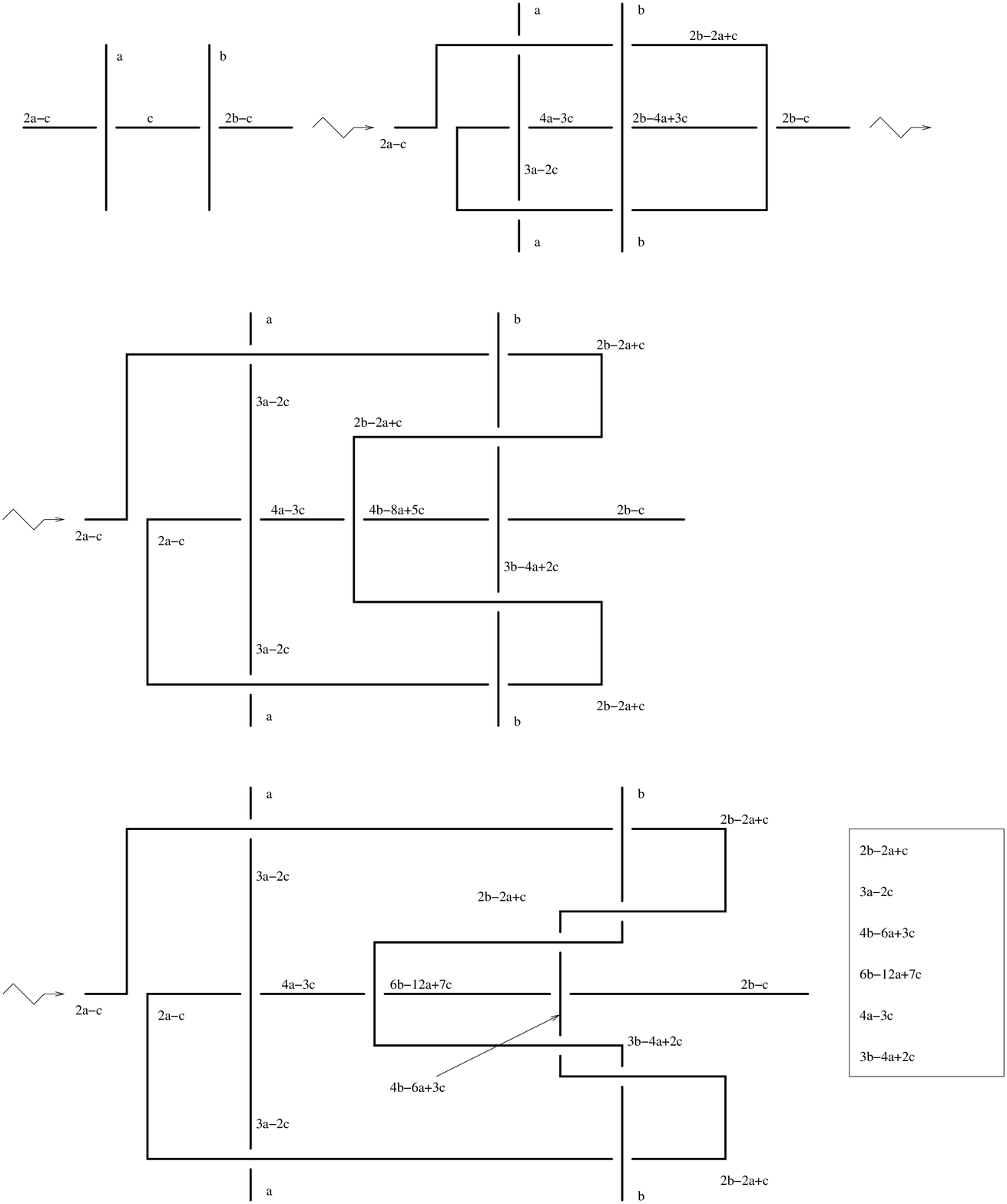}}}
	\caption{Transformation $\gamma_{13}$.}\label{fig:diffunder75}
\end{figure}

\begin{figure}[!ht]
	\psfrag{gamma19}{\huge$\gamma_{10}$}
	\psfrag{gamma6}{\huge$\gamma_6$}
	\psfrag{a}{\huge$a$}
	\psfrag{b}{\huge$b$}
	\psfrag{c}{\huge$c$}
	\psfrag{2b-c}{\huge$2b-c$}
	\psfrag{2a-c}{\huge$2a-c$}
	\psfrag{2b-2a+c}{\huge$2b-2a+c$}
	\psfrag{2b-4a+3c}{\huge$2b-4a+3c$}
	\psfrag{3a-2c}{\huge$3a-2c$}
	\psfrag{6a-b-4c}{\huge$6a-b-4c$}
	\psfrag{8a-2b-5c}{\huge$8a-2b-5c$}
	\psfrag{4b-6a+3c}{\huge$4b-6a+3c$}
	\psfrag{4b-8a+5c}{\huge$4b-8a+5c$}
	\psfrag{4b-7a+4c}{\huge$4b-7a+4c$}
	\psfrag{3b-4a+2c}{\huge$3b-4a+2c$}
	\psfrag{6b-12a+7c}{\huge$6b-12a+7c$}
	\psfrag{10a-7c-2b}{\huge$10a-7c-2b$}
	\psfrag{10a-3b-6c}{\huge$10a-3b-6c$}
	\psfrag{6a-3c-2b}{\huge$6a-3c-2b$}
	\psfrag{14a-4b-9c}{\huge$14a-4b-9c$}
	\psfrag{17a-6b-10c}{\huge$17a-6b-10c$}
	\psfrag{20a-8b-11c}{\huge$20a-8b-11c$}
	\psfrag{4a-3c}{\huge$4a-3c$}
\centerline{\scalebox{.4}{\includegraphics{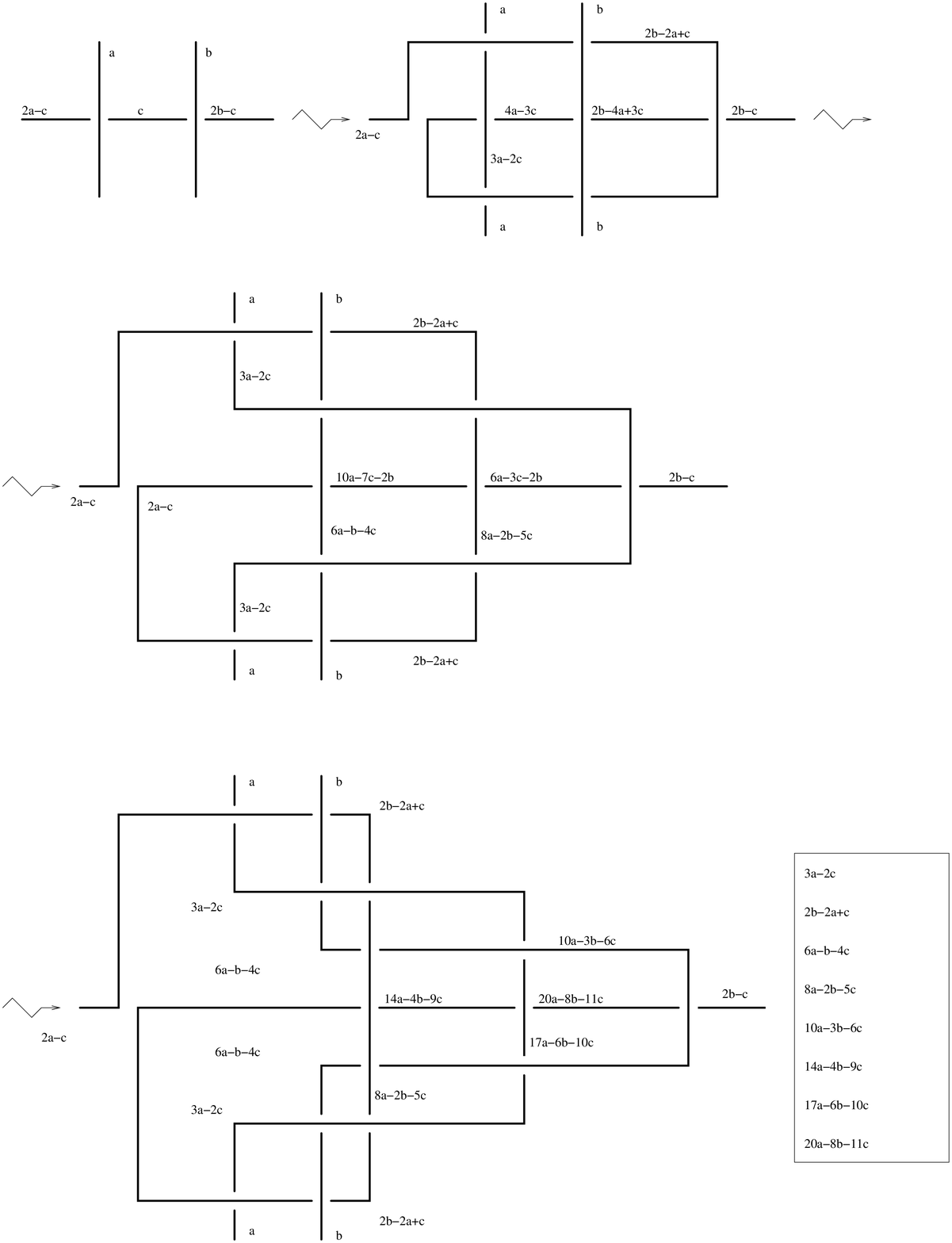}}}
	\caption{Transformation $\gamma_{14}$.}\label{fig:diffunder76}
\end{figure}









\begin{figure}[!ht]
	\psfrag{delta1}{\huge$\delta_{1}$}
	\psfrag{delta2}{\huge$\delta_{2}$}
	\psfrag{a}{\huge$a$}
	\psfrag{c}{\huge$c$}
	\psfrag{19c-18a}{\huge$19c-18a$}
	\psfrag{18c-17a}{\huge$18c-17a$}
	\psfrag{23c-22a}{\huge$23c-22a$}
	\psfrag{13c-12a}{\huge$13c-12a$}
	\psfrag{20c-19a}{\huge$20c-19a$}
	\psfrag{2a-c}{\huge$2a-c$}
	\psfrag{3a-2c}{\huge$3a-2c$}
	\psfrag{6a-5c}{\huge$6a-5c$}
	\psfrag{5c-4a}{\huge$5c-4a$}
	\psfrag{6c-5a}{\huge$6c-5a$}
	\psfrag{9a-8c}{\huge$9a-8c$}
	\psfrag{3c-2a}{\huge$3c-2a$}
	\psfrag{8c-7a}{\huge$8c-7a$}
	\psfrag{7c-6a}{\huge$7c-6a$}
	 \centerline{\scalebox{.4}{\includegraphics{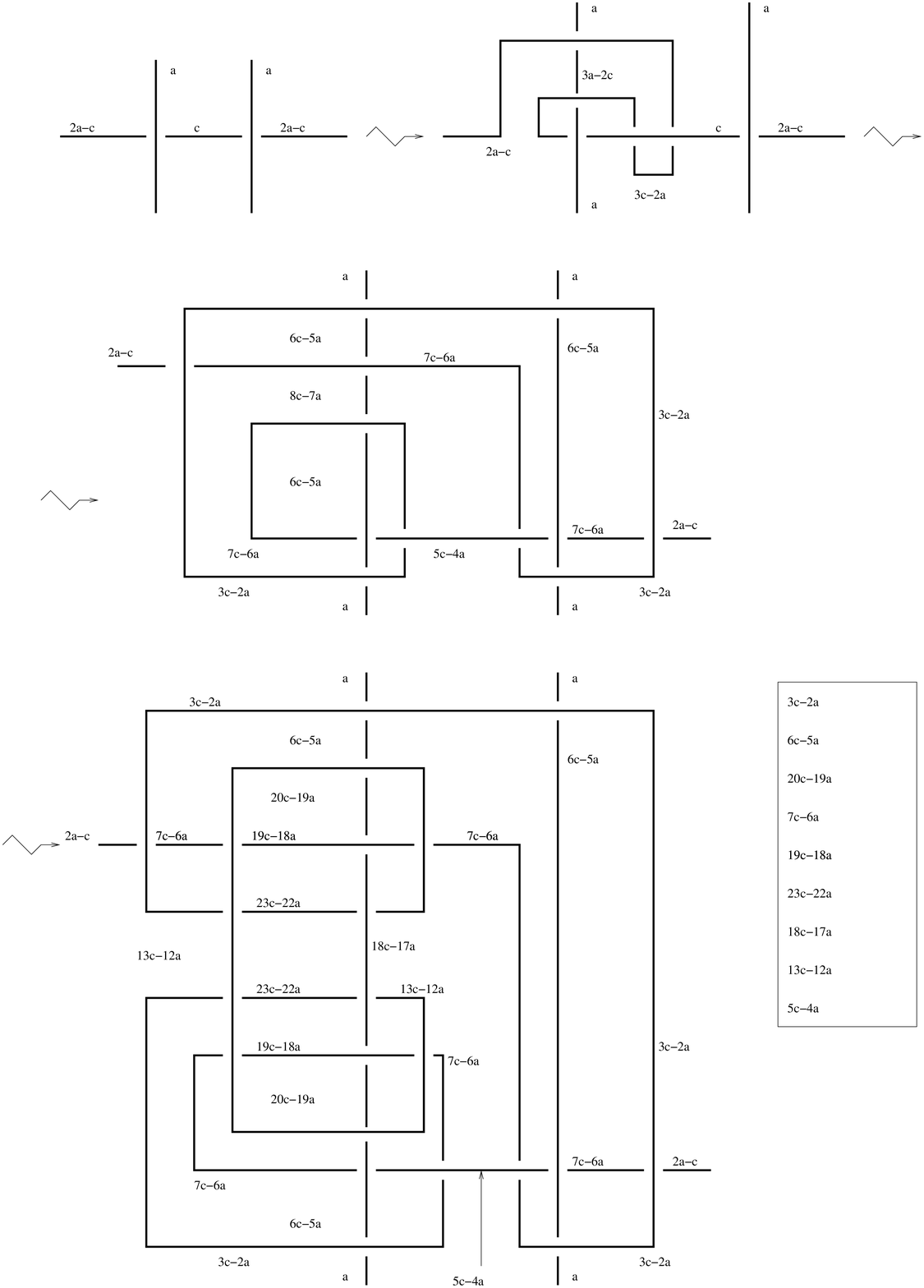}}}
	\caption{Transformation $\delta_4$.}\label{fig:sameunder10}
\end{figure}





\begin{figure}[!ht]
	\psfrag{delta1}{\huge$\delta_{1}$}
	\psfrag{delta3}{\huge$\delta_{3}$}
	\psfrag{a}{\huge$a$}
	\psfrag{c}{\huge$c$}
	\psfrag{12a-11c}{\huge$12a-11c$}
	\psfrag{19a-18c}{\huge$19a-18c$}
	\psfrag{9a-8c}{\huge$9a-8c$}
	\psfrag{2a-c}{\huge$2a-c$}
	\psfrag{4a-3c}{\huge$4a-3c$}
	\psfrag{2c-a}{\huge$2c-a$}
	\psfrag{3a-2c}{\huge$3a-2c$}
	\psfrag{6a-5c}{\huge$6a-5c$}
	\psfrag{7a-6c}{\huge$7a-6c$}
	\psfrag{5a-4c}{\huge$5a-4c$}
	\psfrag{5c-4a}{\huge$5c-4a$}
	 \centerline{\scalebox{.4}{\includegraphics{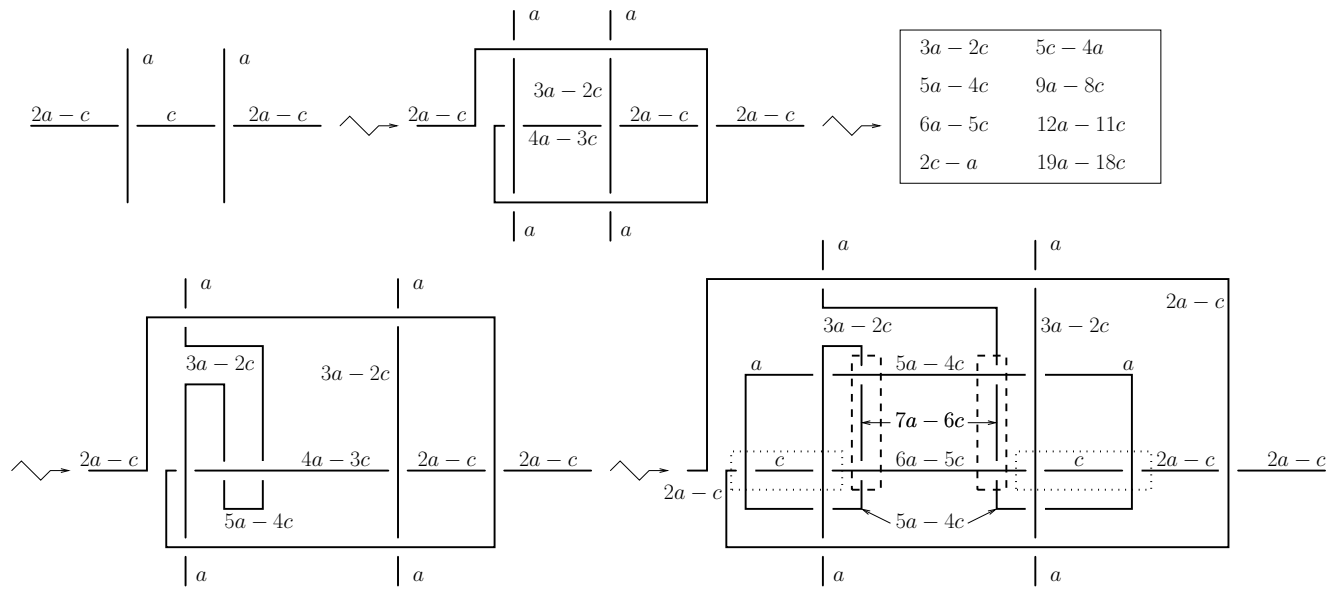}}}
	\caption{Transformation $\delta_{5}$. The dotted boxes and broken line boxes will be addressed in Figures \ref{fig:sameunder13dottedbox} and \ref{fig:sameunder13brokenlinebox}, respectively.}\label{fig:sameunder13}
\end{figure}

\clearpage

\begin{figure}[!ht]
	\psfrag{delta1}{\huge$\delta_{1}$}
	\psfrag{delta3}{\huge$\delta_{3}$}
	\psfrag{a}{\huge$a$}
	\psfrag{2a-c}{\huge$2a-c$}
	\psfrag{c}{\huge$c$}
	\psfrag{2c-a}{\huge$2c-a$}
	\psfrag{3a-2c}{\huge$3a-2c$}
	\psfrag{6a-5c}{\huge$6a-5c$}
	\psfrag{5c-4a}{\huge$5c-4a$}
	\psfrag{11}{\huge$\mathbf{11}$}
	 \centerline{\scalebox{.4}{\includegraphics{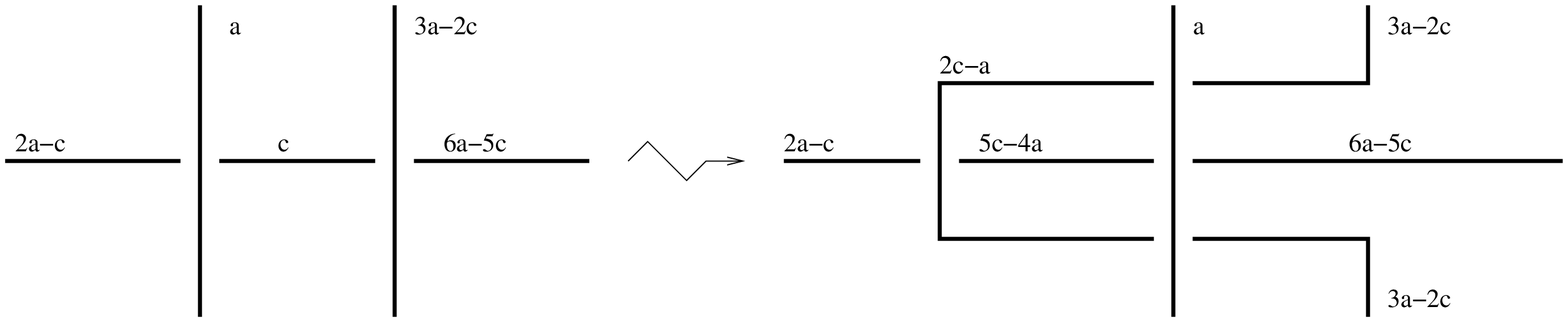}}}
	\caption{Transformation $\delta_{5}$: addressing the dotted box from Figure \ref{fig:sameunder13}.}\label{fig:sameunder13dottedbox}
\end{figure}





\begin{figure}[!ht]
	\psfrag{delta1}{\huge$\delta_{1}$}
	\psfrag{delta3}{\huge$\delta_{3}$}
	\psfrag{a}{\huge$a$}
	\psfrag{7a-6c}{\huge$7a-6c$}
	\psfrag{2c-a}{\huge$2c-a$}
	\psfrag{3a-2c}{\huge$3a-2c$}
	\psfrag{5a-4c}{\huge$5a-4c$}
	\psfrag{6a-5c}{\huge$6a-5c$}
	\psfrag{9a-8c}{\huge$9a-8c$}
	\psfrag{13a-12c}{\huge$13a-12c$}
	\psfrag{19a-18c}{\huge$19a-18c$}
	\psfrag{12a-11c}{\huge$12a-11c$}
	 \centerline{\scalebox{.4}{\includegraphics{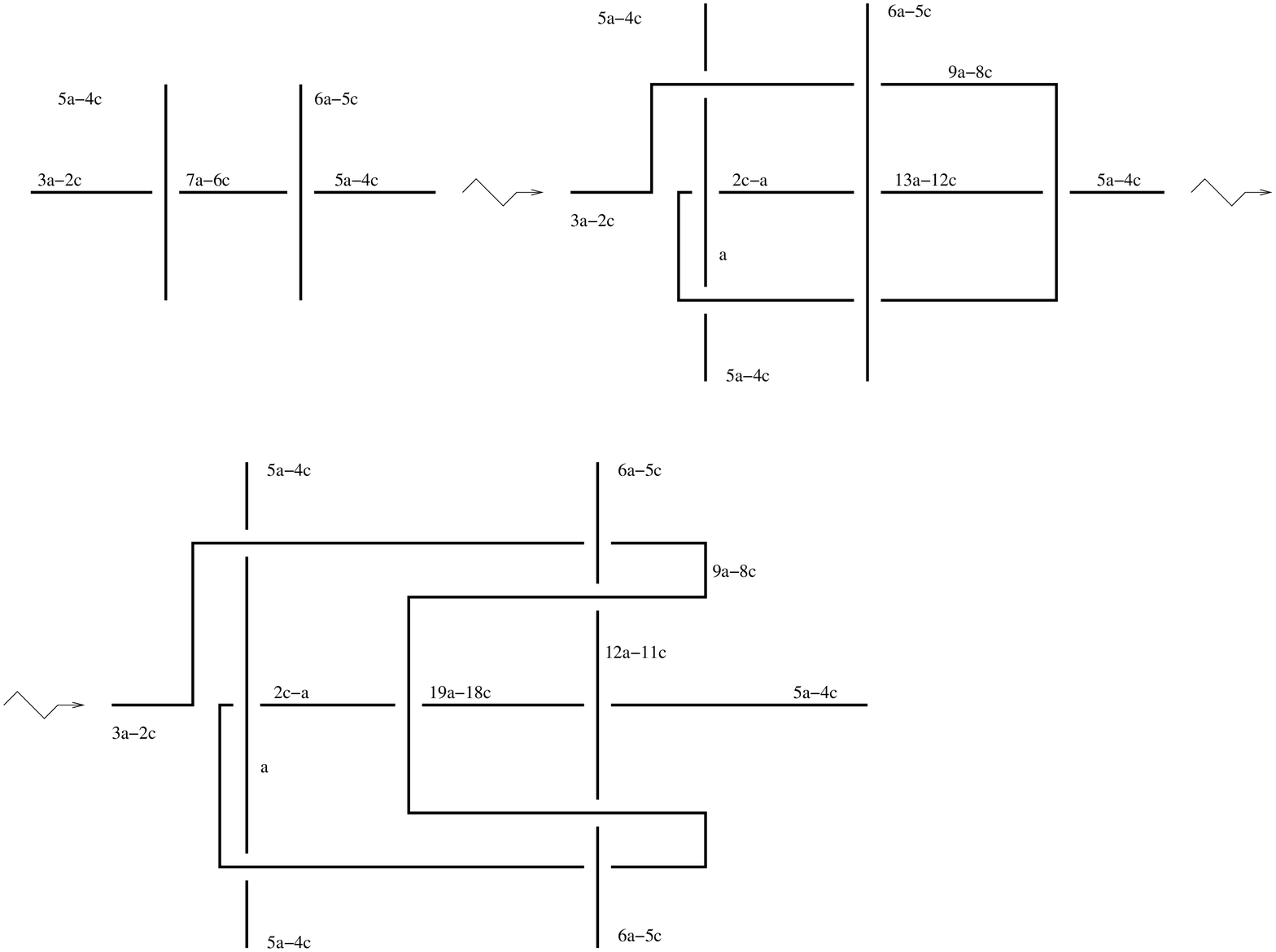}}}
	\caption{Transformation $\delta_{5}$: addressing the broken line box  from Figure \ref{fig:sameunder13}.}\label{fig:sameunder13brokenlinebox}
\end{figure}



\begin{figure}[!ht]
	\psfrag{delta1}{\huge$\delta_{1}$}
	\psfrag{delta3}{\huge$\delta_{3}$}
	\psfrag{a}{\huge$a$}
	\psfrag{c}{\huge$c$}
	\psfrag{8a-7c}{\huge$8a-7c$}
	\psfrag{7a-6c}{\huge$7a-6c$}
	\psfrag{2c-a}{\huge$2c-a$}
	\psfrag{2a-c}{\huge$2a-c$}
	\psfrag{3a-2c}{\huge$3a-2c$}
	\psfrag{4c-3a}{\huge$4c-3a$}
	\psfrag{4a-3c}{\huge$4a-3c$}
	\psfrag{5a-4c}{\huge$5a-4c$}
	\psfrag{6a-5c}{\huge$6a-5c$}
	\psfrag{9a-8c}{\huge$9a-8c$}
	\psfrag{13a-12c}{\huge$13a-12c$}
	\psfrag{19a-18c}{\huge$19a-18c$}
	\psfrag{12a-11c}{\huge$12a-11c$}
	\psfrag{11a-10c}{\huge$11a-10c$}
	 \centerline{\scalebox{.4}{\includegraphics{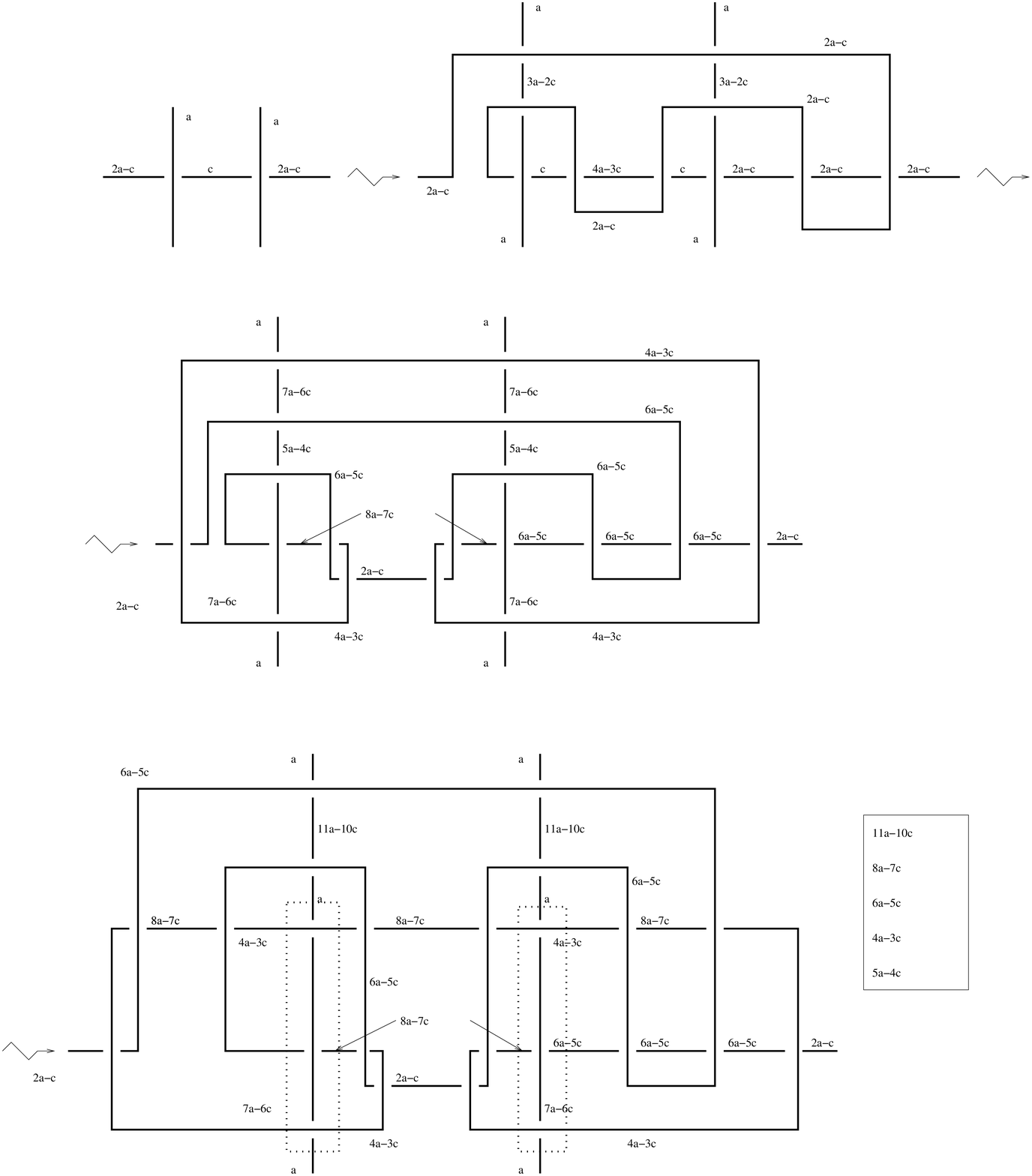}}}
	\caption{Transformation $\delta_{6}$. The dotted boxes will be addressed in Figure \ref{fig:sameunder55dottedbox}.}\label{fig:sameunder55}
\end{figure}



\begin{figure}[!ht]
	\psfrag{delta1}{\huge$\delta_{1}$}
	\psfrag{delta3}{\huge$\delta_{3}$}
	\psfrag{a}{\huge$a$}
	\psfrag{c}{\huge$c$}
	\psfrag{8a-7c}{\huge$8a-7c$}
	\psfrag{7a-6c}{\huge$7a-6c$}
	\psfrag{2c-a}{\huge$2c-a$}
	\psfrag{2a-c}{\huge$2a-c$}
	\psfrag{3a-2c}{\huge$3a-2c$}
	\psfrag{4c-3a}{\huge$4c-3a$}
	\psfrag{4a-3c}{\huge$4a-3c$}
	\psfrag{5a-4c}{\huge$5a-4c$}
	\psfrag{6a-5c}{\huge$6a-5c$}
	\psfrag{9a-8c}{\huge$9a-8c$}
	\psfrag{13a-12c}{\huge$13a-12c$}
	\psfrag{19a-18c}{\huge$19a-18c$}
	\psfrag{12a-11c}{\huge$12a-11c$}
	\psfrag{11a-10c}{\huge$11a-10c$}
	\psfrag{11}{\huge$\mathbf{11}$}
	 \centerline{\scalebox{.4}{\includegraphics{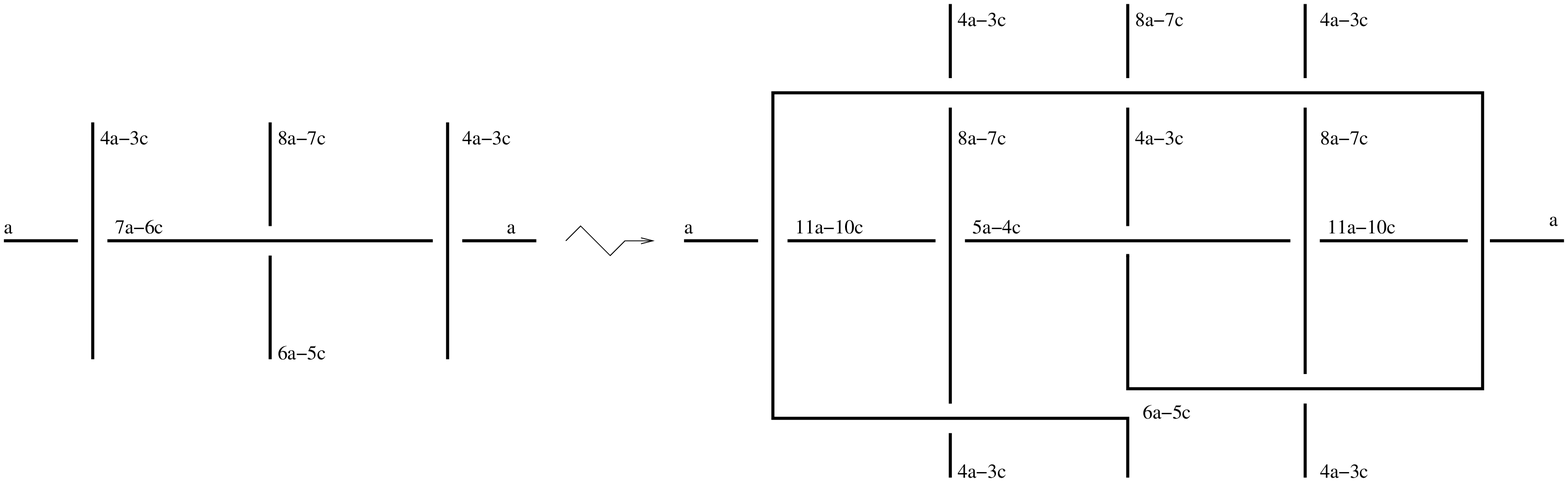}}}
	\caption{Transformation $\delta_{6}$: addressing the dotted boxes from Figure \ref{fig:sameunder55}.}\label{fig:sameunder55dottedbox}
\end{figure}

\begin{figure}[!ht]
	\psfrag{delta1}{\huge$\delta_{1}$}
	\psfrag{delta2}{\huge$\delta_{2}$}
	\psfrag{a}{\huge$a$}
	\psfrag{c}{\huge$c$}
	\psfrag{9a-8c}{\huge$9a-8c$}
	\psfrag{6a-5c}{\huge$6a-5c$}
	\psfrag{2a-c}{\huge$2a-c$}
	\psfrag{3a-2c}{\huge$3a-2c$}
	\psfrag{5c-4a}{\huge$5c-4a$}
	\psfrag{6c-5a}{\huge$6c-5a$}
	\psfrag{3c-2a}{\huge$3c-2a$}
	\psfrag{7c-6a}{\huge$7c-6a$}
	 \centerline{\scalebox{.4}{\includegraphics{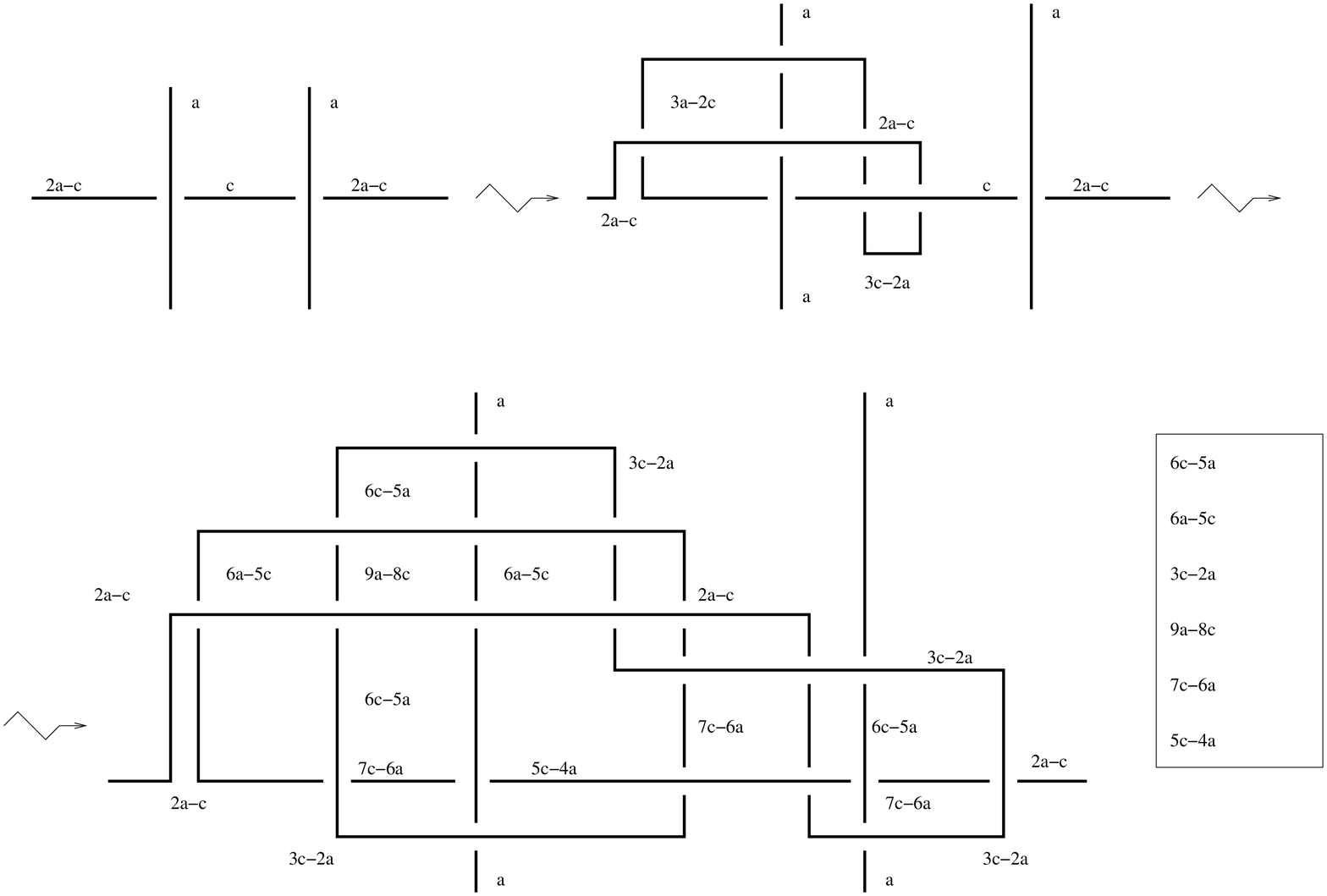}}}
	\caption{Transformation $\delta_7$.}\label{fig:sameunder8}
\end{figure}

\begin{figure}[!ht]
	\psfrag{delta1}{\huge$\delta_{1}$}
	\psfrag{delta2}{\huge$\delta_{2}$}
	\psfrag{a}{\huge$a$}
	\psfrag{c}{\huge$c$}
	\psfrag{12a-11c}{\huge$12a-11c$}
	\psfrag{2a-c}{\huge$2a-c$}
	\psfrag{2c-a}{\huge$2c-a$}
	\psfrag{3a-2c}{\huge$3a-2c$}
	\psfrag{4a-3c}{\huge$4a-3c$}
	\psfrag{4c-3a}{\huge$4c-3a$}
	\psfrag{6a-5c}{\huge$6a-5c$}
	\psfrag{5a-4c}{\huge$5a-4c$}
	\psfrag{5c-4a}{\huge$5c-4a$}
	\psfrag{11c-10a}{\huge$11c-10a$}
	\psfrag{9a-8c}{\huge$9a-8c$}
	\psfrag{3c-2a}{\huge$3c-2a$}
	 \centerline{\scalebox{.4}{\includegraphics{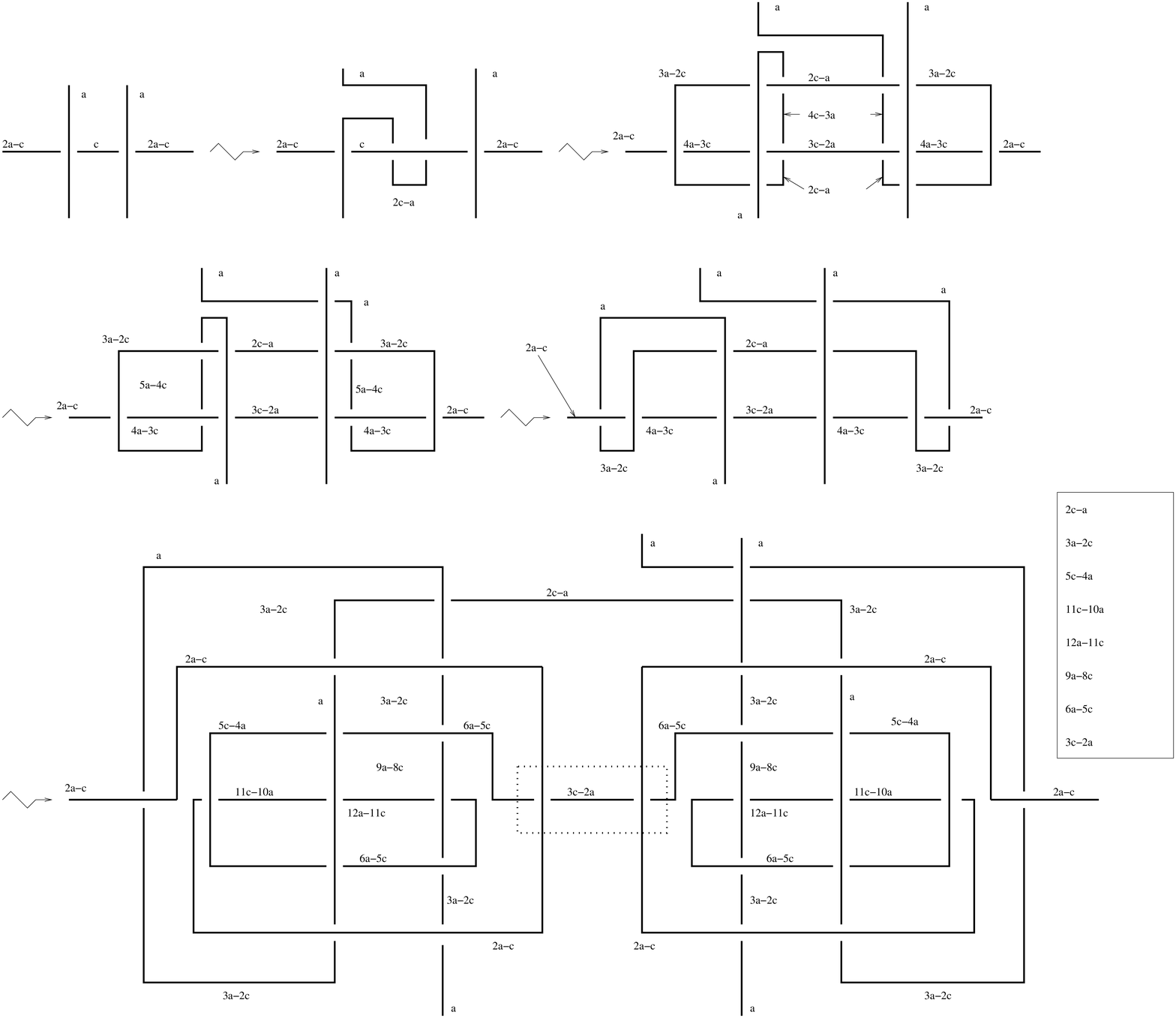}}}
	\caption{Transformation $\delta_8$.}\label{fig:sameunder18}
\end{figure}

\begin{figure}[!ht]
	\psfrag{delta1}{\huge$\delta_{1}$}
	\psfrag{delta2}{\huge$\delta_{2}$}
	\psfrag{a}{\huge$a$}
	\psfrag{c}{\huge$c$}
	\psfrag{12a-11c}{\huge$12a-11c$}
	\psfrag{2a-c}{\huge$2a-c$}
	\psfrag{2c-a}{\huge$2c-a$}
	\psfrag{3a-2c}{\huge$3a-2c$}
	\psfrag{4a-3c}{\huge$4a-3c$}
	\psfrag{6a-5c}{\huge$6a-5c$}
	\psfrag{5a-4c}{\huge$5a-4c$}
	\psfrag{5c-4a}{\huge$5c-4a$}
	\psfrag{11c-10a}{\huge$11c-10a$}
	\psfrag{9a-8c}{\huge$9a-8c$}
	\psfrag{10a-9c}{\huge$10a-9c$}
	\psfrag{14a-13c}{\huge$14a-13c$}
	\psfrag{3c-2a}{\huge$3c-2a$}
	 \centerline{\scalebox{.4}{\includegraphics{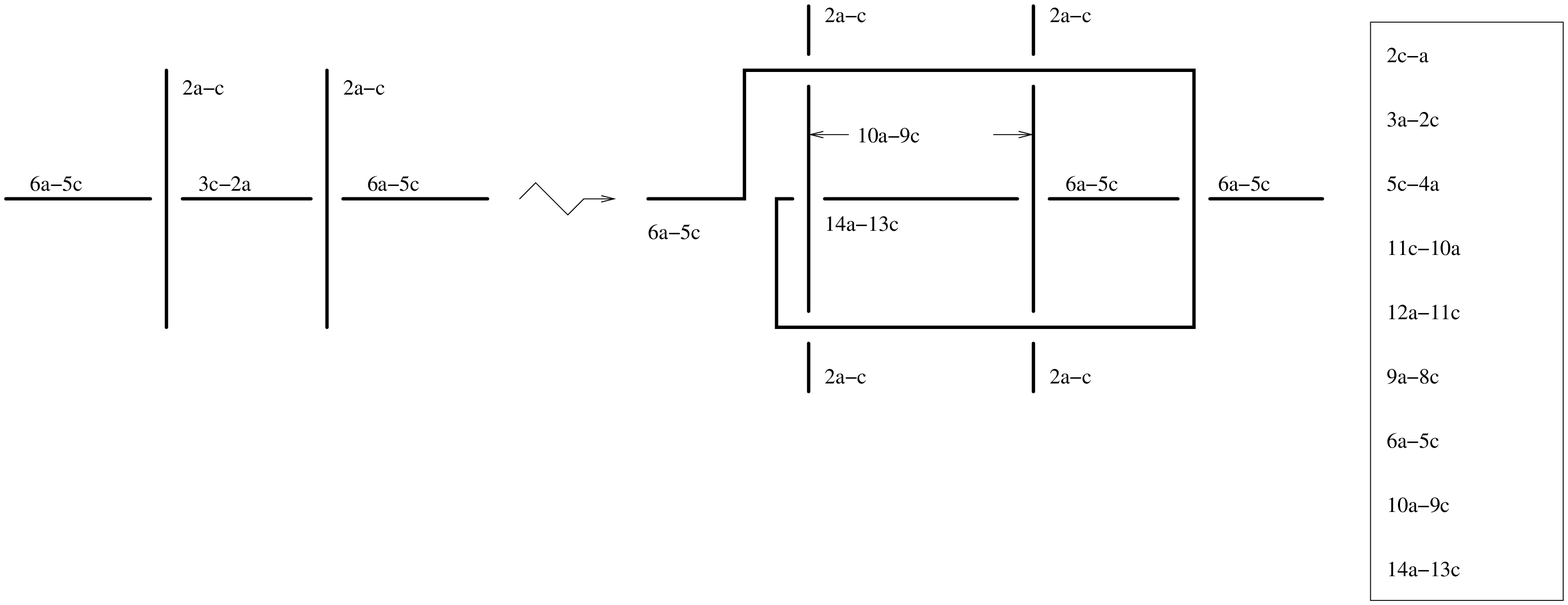}}}
	\caption{Transformation $\delta_9$: it is obtained from transformation $\delta_8$ treating its dotted box as in the current figure. The list presented in this figure is the complete list corresponding to the full transformation $\delta_9$.}\label{fig:sameunder18plus}
\end{figure}



\begin{figure}[!ht]
	\psfrag{delta1}{\huge$\delta_{1}$}
	\psfrag{delta2}{\huge$\delta_{2}$}
	\psfrag{a}{\huge$a$}
	\psfrag{c}{\huge$c$}
	\psfrag{2a-c}{\huge$2a-c$}
	\psfrag{2c-a}{\huge$2c-a$}
	\psfrag{4c-3a}{\huge$4c-3a$}
	\psfrag{4a-3c}{\huge$4a-3c$}
	\psfrag{5c-4a}{\huge$5c-4a$}
	\psfrag{3c-2a}{\huge$3c-2a$}
	\psfrag{7c-6a}{\huge$7c-6a$}
\centerline{\scalebox{.4}{\includegraphics{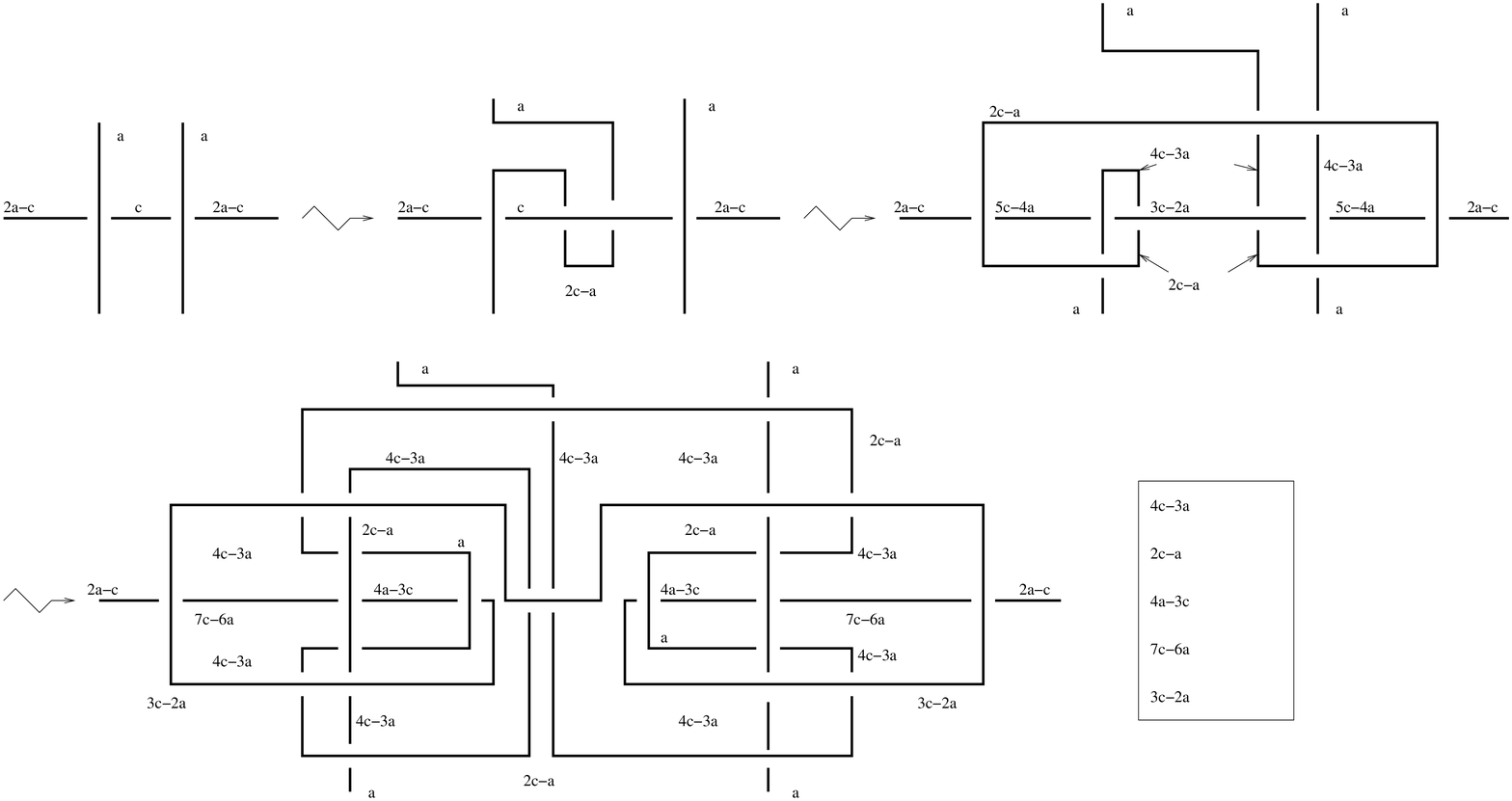}}}
	\caption{Transformation $\delta_{10}$.}\label{fig:sameunder20}
\end{figure}

\clearpage

\begin{figure}[!ht]
	\psfrag{delta1}{\huge$\delta_{1}$}
	\psfrag{delta2}{\huge$\delta_{2}$}
	\psfrag{a}{\huge$a$}
	\psfrag{c}{\huge$c$}
	\psfrag{28c-27a}{\huge$28c-27a$}
	\psfrag{2a-c}{\huge$2a-c$}
	\psfrag{5a-4c}{\huge$5a-4c$}
	\psfrag{2c-a}{\huge$2c-a$}
	\psfrag{4c-3a}{\huge$4c-3a$}
	\psfrag{4a-3c}{\huge$4a-3c$}
	\psfrag{6c-5a}{\huge$6c-5a$}
	\psfrag{3c-2a}{\huge$3c-2a$}
	\psfrag{9c-8a}{\huge$9c-8a$}
	\psfrag{8c-7a}{\huge$8c-7a$}
	\psfrag{7c-6a}{\huge$7c-6a$}
	\psfrag{12c-11a}{\huge$12c-11a$}
	\psfrag{22c-21a}{\huge$22c-21a$}
	\psfrag{15c-14a}{\huge$15c-14a$}
	 \centerline{\scalebox{.4}{\includegraphics{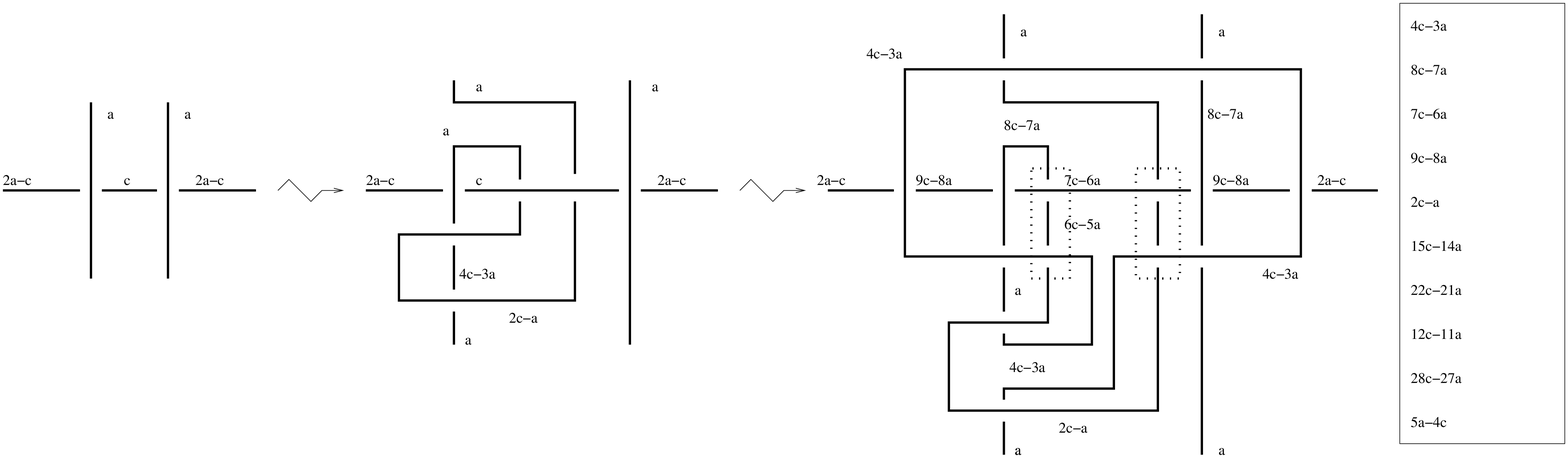}}}
	\caption{Transformation $\delta_{11}$. The dotted box will be addressed in Figure \ref{fig:sameunder22dottedbox}.}\label{fig:sameunder22}
\end{figure}

\begin{figure}[!ht]
	\psfrag{delta1}{\huge$\delta_{1}$}
	\psfrag{delta2}{\huge$\delta_{2}$}
	\psfrag{2c-a}{\huge$2c-a$}
	\psfrag{4c-3a}{\huge$4c-3a$}
	\psfrag{6c-5a}{\huge$6c-5a$}
	\psfrag{6a-5c}{\huge$6a-5c$}
	\psfrag{5a-4c}{\huge$5a-4c$}
	\psfrag{3c-2a}{\huge$3c-2a$}
	\psfrag{9c-8a}{\huge$9c-8a$}
	\psfrag{8c-7a}{\huge$8c-7a$}
	\psfrag{7c-6a}{\huge$7c-6a$}
	\psfrag{12c-11a}{\huge$12c-11a$}
	\psfrag{10c-9a}{\huge$10c-9a$}
	\psfrag{22c-21a}{\huge$22c-21a$}
	\psfrag{28c-27a}{\huge$28c-27a$}
	\psfrag{15c-14a}{\huge$15c-14a$}
	\psfrag{14c-13a}{\huge$14c-13a$}
	 \centerline{\scalebox{.4}{\includegraphics{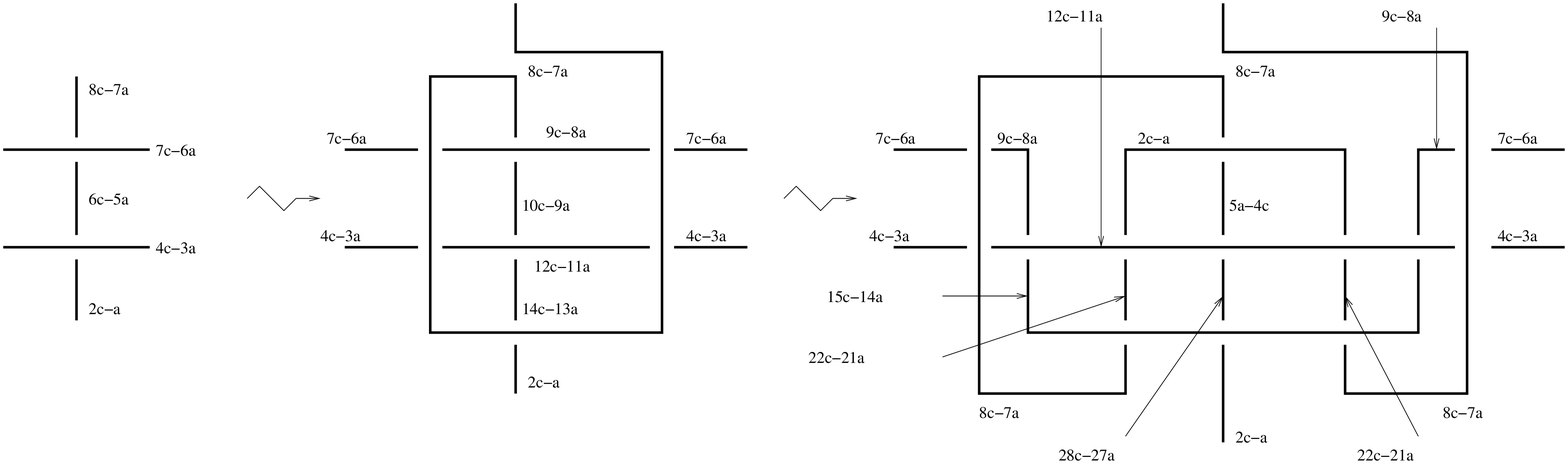}}}
	\caption{Transformation $\delta_{11}$. Addressing the dotted box from Figure \ref{fig:sameunder22}.}\label{fig:sameunder22dottedbox}
\end{figure}

\begin{table}[h!]
\begin{center}
\begin{tabular}{| c ||   c |  c |   c |  c |   c |  c |    c |   c |  c |   c  |}\hline
$a$                             & $0$       & $1$       & $2$      & $3$      &   $4$    & $5$      & $7$       & $8$    & $9$   & $10$ \\ \hline
Transf. $\alpha_{\dots}$        & $X$       & $X$       & $2$      & $1$      & $1$      & $1$      & $1$       & $1$    & $2$   & $1$ \\ \hline
\end{tabular}
\caption{Elimination of color $6$ from monochromatic crossings.}
\label{Ta:6alpha}
\end{center}
\end{table}


\begin{table}[h!]
\begin{center}
\begin{tabular}{| c ||   c |  c |   c |   c |  c |   c |  c |   c |  c |   c |}\hline
$a$                         & $0$ &  $1$ & $2$ & $3$   & $4$  & $5$  & $7$  & $8$  & $9$  & $10$ \\ \hline
Transf. $\beta_{\dots}$     & $X$ &  $X$ & $2$ & $1$   & $1$  & $1$  & $1$  & $2$  & $2$  & $1$   \\ \hline
\end{tabular}
\caption{Elimination of color $6$ from over-arcs of polichromatic crossings.}
\label{Ta:6beta}
\end{center}
\end{table}


\begin{table}[h!]
\begin{center}
\begin{tabular}{| c ||  c |||   c  |   c |  c |   c |  c |   c |  c |   c |  c | }\hline
$a=0$ &  $b$                        & $1$ & $2$ & $3$  & $4$  & $5$  & $7$  & $8$  & $9$  & $10$ \\ \hline
   &  Transf. $\gamma_{\dots}$      & $2$ & $X$ & $1$  & $2$  & $1$  & $2$  & $1$  & $X$  & $2$   \\ \hline \hline
$a=1$ &  $b$                        & $0$ & $2$ & $3$  & $4$   & $5$  & $7$  & $8$  & $9$  & $10$ \\ \hline
   &  Transf. $\gamma_{\dots}$      & $1$ & $X$ & $2$  & $5$  & $2$  & $1$  & $1$  & $X$  & $1$   \\ \hline \hline
$a=2$ &  $b$                        & $0$ & $1$ & $3$  & $4$   & $5$  & $7$  & $8$  & $9$  & $10$ \\ \hline
   &  Transf. $\gamma_{\dots}$      & $X$ & $X$ & $X$  & $X$   & $X$  & $X$  & $X$  & $X$  & $X$   \\ \hline \hline
$a=3$ &  $b$                        & $0$ & $1$ & $2$  & $4$  & $5$  & $7$  & $8$  & $9$  & $10$ \\ \hline
   &  Transf. $\gamma_{\dots}$      & $2$ & $1$ & $X$  & $1$  & $1$  & $4$  & $2$  & $X$  & $1$   \\ \hline \hline
$a=4$ &  $b$                        & $0$ & $1$ & $2$  & $3$   & $5$  & $7$  & $8$  & $9$  & $10$ \\ \hline
   &  Transf. $\gamma_{\dots}$      & $1$ & $6$ & $X$  & $1$   & $1$  & $1$  & $6$  & $X$  & $2$   \\ \hline \hline
$a=5$ &  $b$                        & $0$ & $1$ & $2$  & $3$  & $4$  & $7$  & $8$  & $9$  & $10$ \\ \hline
   &  Transf. $\gamma_{\dots}$      & $1$ & $1$ & $X$  & $1$  & $2$   & $1$  & $1$  & $X$  & $1$     \\ \hline \hline
$a=7$ &  $b$                        & $0$ & $1$ & $2$  & $3$  & $4$  & $5$  & $8$  & $9$  & $10$ \\ \hline
   &  Transf. $\gamma_{\dots}$      & $1$ & $1$ & $X$  & $3$  & $2$  & $1$  & $2$  & $X$  & $1$   \\ \hline \hline
$a=8$ &  $b$                        & $0$ & $1$ & $2$  & $3$   & $4$  & $5$  & $7$  & $9$  & $10$ \\ \hline
   &  Transf. $\gamma_{\dots}$      & $1$ & $1$ & $X$  & $1$   & $5$  & $2$  & $1$  & $X$  & $4$   \\ \hline \hline
$a=9$ &  $b$                        & $0$ & $1$ & $2$  & $3$   & $4$  & $5$  & $7$  & $8$  & $10$ \\ \hline
   &  Transf. $\gamma_{\dots}$      & $X$ & $X$ & $X$  & $X$   & $X$  & $X$  & $X$  & $X$  & $X$   \\ \hline \hline
$a=10$ &  $b$                       & $0$ & $1$ & $2$  & $3$   & $4$  & $5$  & $7$  & $8$  & $9$ \\ \hline
   &  Transf. $\gamma_{\dots}$      & $1$ & $2$ & $X$  & $1$   & $1$  & $1$  & $2$  & $3$  & $X$   \\ \hline \hline
\end{tabular}
\caption{Elimination of color $6$ from under-arcs joining crossings whose over-arcs bear distinct colors.}
\label{Ta:6gamma}
\end{center}
\end{table}


\begin{table}[h!]
\begin{center}
\begin{tabular}{| c ||   c |  c |   c |   c |  c |   c |  c |   c |  c |   c |}\hline
$a$                        & $0$ & $1$ & $2$ & $3$    & $4$  & $5$  & $7$  & $8$  & $9$  & $10$ \\ \hline
Transf. $\delta_{\dots}$   & $1$ & $2$ & $X$ & $1$    & $2$  & $1$  & $1$  & $3$  & $X$  & $1$   \\ \hline
\end{tabular}
\caption{Elimination of color $6$ from under-arcs joining crossings whose over-arcs bear the same color.}
\label{Ta:6delta}
\end{center}
\end{table}







\begin{table}[h!]
\begin{center}
\begin{tabular}{| c ||   c |  c |   c |  c |   c |  c |    c |   c |   c |   }\hline
$a$                          & $0$    &   $1$     & $2$     & $4$    & $5$    & $7$     & $8$   & $9$ & $10$ \\ \hline
Transf. $\alpha_{\dots}$     & $X$    &   $2$     & $1$     & $1$    & $1$    & $X$     & $X$   & $1$ & $1$ \\ \hline
\end{tabular}
\caption{Elimination of color $3$ from monochromatic crossings.}
\label{Ta:3alpha}
\end{center}
\end{table}


\begin{table}[h!]
\begin{center}
\begin{tabular}{| c ||   c |  c |   c |   c |  c |   c |  c |   c |  c |}\hline
$a$                         & $0$ &  $1$ & $2$ & $4$   & $5$  & $7$ & $8$  & $9$  & $10$ \\ \hline
Transf. $\beta_{\dots}$     & $X$ &  $2$ & $1$ & $2$   & $1$  & $X$ & $X$  & $1$  & $2$   \\ \hline
\end{tabular}
\caption{Elimination of color $3$ from over-arcs of polichromatic crossings.}
\label{Ta:3beta}
\end{center}
\end{table}


\begin{table}[h!]
\begin{center}
\begin{tabular}{| c ||  c |||   c  |   c |  c |   c |  c |   c |  c |   c |  }\hline
$a=0$ &  $b$                      & $1$ & $2$ & $4$  & $5$  & $7$  & $8$  & $9$  & $10$ \\ \hline
   &  Transf. $\gamma_{\dots}$    & $X$ & $2$ & $1$  & $2$  & $X$  & $1$  & $2$  & $2$   \\ \hline \hline
$a=1$ &  $b$                      & $0$ & $2$ & $4$  & $5$  & $7$  & $8$  & $9$  & $10$ \\ \hline
   &  Transf. $\gamma_{\dots}$    & $X$ & $X$ & $X$  & $X$  & $X$  & $X$  & $X$  & $X$   \\ \hline \hline
$a=2$ &  $b$                      & $0$ & $1$ & $4$  & $5$  & $7$  & $8$  & $9$  & $10$ \\ \hline
   &  Transf. $\gamma_{\dots}$    & $1$ & $X$ & $10$ & $2$  & $X$  & $1$  & $1$  & $1$   \\ \hline \hline
$a=4$ &  $b$                      & $0$ & $1$ & $2$  & $5$  & $7$  & $8$  & $9$  & $10$ \\ \hline
   &  Transf. $\gamma_{\dots}$    & $2$ & $X$ & $!10$ & $12$ & $X$  & $1$  & $2$  & $11$   \\ \hline \hline
$a=5$ &  $b$                      & $0$ & $1$ & $2$  & $4$  & $7$  & $8$  & $9$  & $10$ \\ \hline
   &  Transf. $\gamma_{\dots}$    & $1$ & $X$ & $1$  & $7$  & $X$  & $1$  & $1$  & $2$     \\ \hline \hline
$a=7$ &  $b$                      & $0$ & $1$ & $2$  & $4$  & $5$  & $8$  & $9$  & $10$ \\ \hline
   &  Transf. $\gamma_{\dots}$    & $X$ & $X$ & $X$  & $X$  & $X$  & $X$  & $X$  & $X$   \\ \hline \hline
$a=8$ &  $b$                      & $0$ & $1$ & $2$  & $4$  & $5$  & $7$  & $9$  & $10$ \\ \hline
   &  Transf. $\gamma_{\dots}$    & $2$ & $X$ & $1$  & $2$  & $2$  & $X$  & $1$  & $9$   \\ \hline \hline
$a=9$ &  $b$                      & $0$ & $1$ & $2$  & $4$  & $5$  & $7$  & $8$  & $10$ \\ \hline
   &  Transf. $\gamma_{\dots}$    & $1$ & $X$ & $2$  & $1$  & $2$  & $X$  & $1$  & $1$   \\ \hline \hline
$a=10$ &  $b$                     & $0$ & $1$ & $2$  & $4$  & $5$  & $7$  & $8$  & $9$ \\ \hline
   &  Transf. $\gamma_{\dots}$    & $1$ & $X$ & $2$  & $12$ & $1$  & $X$  & $!9$  & $2$   \\ \hline \hline
\end{tabular}
\caption{Elimination of color $3$ from under-arcs joining crossings whose over-arcs bear distinct colors ($a$ and $b$). The ``!'' means the values of the $a$ and $b$ parameters have to be interchanged.}
\label{Ta:3gamma}
\end{center}
\end{table}


\begin{table}[h!]
\begin{center}
\begin{tabular}{| c ||   c |   c |   c |  c |   c |  c |   c |  c |   c |}\hline
$a$                       & $0$ & $1$ & $2$ & $4$    & $5$   & $7$  & $8$  & $9$  & $10$ \\ \hline
Transf. $\delta_{\dots}$  & $1$ & $X$ & $2$ & $3$    & $5$   & $X$  & $1$  & $1$  & $4$   \\ \hline
\end{tabular}
\caption{Elimination of color $3$ from under-arcs joining crossings whose over-arcs bear the same color.}
\label{Ta:3delta}
\end{center}
\end{table}







\begin{table}[h!]
\begin{center}
\begin{tabular}{| c ||   c |  c |   c |  c |   c |  c |    c |   c |  c | }\hline
$a$                          & $0$        & $1$       & $2$       & $5$          & $7$       & $8$       & $9$      & $10$ \\ \hline
Transf. $\alpha_{\dots}$     & $1$        & $2$       & $X$       & $X$          & $1$       & $2$       & $X$      & $X$ \\ \hline
\end{tabular}
\caption{Elimination of color $4$ from monochromatic crossings.}
\label{Ta:4alpha}
\end{center}
\end{table}


\begin{table}[h!]
\begin{center}
\begin{tabular}{| c ||   c |  c |   c |   c |  c |   c |  c |   c |  c |}\hline
$a$                      & $0$ & $1$   & $2$   & $5$  & $7$ & $8$  & $9$  & $10$ \\ \hline
Transf. $\beta_{\dots}$  & $1$ & $2$   & $X$   & $X$  & $1$ & $2$  & $X$  & $X$   \\ \hline
\end{tabular}
\caption{Elimination of color $4$ from over-arcs of polichromatic crossings.}
\label{Ta:4beta}
\end{center}
\end{table}


\begin{table}[h!]
\begin{center}
\begin{tabular}{| c ||  c |||  c  |   c |  c |   c |  c |   c |  c |   c |  }\hline
$a=0$ &  $b$                      & $1$  & $2$ & $5$  & $7$  & $8$  & $9$  & $10$  \\ \hline
   &  Transf. $\gamma_{\dots}$    & $X$ & $13$  & $X$  & $2$  & $X$  & $2$  & $X$   \\ \hline \hline
$a=1$ &  $b$                      & $0$ & $2$  & $5$  & $7$  & $8$  & $9$  & $10$  \\ \hline
   &  Transf. $\gamma_{\dots}$    & $X$ & $X$  & $X$  & $X$  & $X$  & $X$  & $X$   \\ \hline \hline
$a=2$ &  $b$                      & $0$  & $1$  & $5$  & $7$  & $8$ & $9$  & $10$  \\ \hline
   &  Transf. $\gamma_{\dots}$    & $!13$ & $X$  & $X$  & $1$  & $X$ & $6$  & $X$   \\ \hline \hline
$a=5$ &  $b$                      & $0$ & $1$  & $2$  & $7$  & $8$  & $9$  & $10$  \\ \hline
   &  Transf. $\gamma_{\dots}$    & $X$ & $X$  & $X$  & $X$   & $X$  & $X$  & $X$     \\ \hline \hline
$a=7$ &  $b$                      & $0$ & $1$  & $2$  & $5$  & $8$  & $9$  & $10$  \\ \hline
   &  Transf. $\gamma_{\dots}$    & $1$ & $X$  & $2$  & $X$  & $X$  & $1$  & $X$   \\ \hline \hline
$a=8$ &  $b$                      & $0$  & $1$  & $2$  & $5$  & $7$  & $9$  & $10$  \\ \hline
   &  Transf. $\gamma_{\dots}$    & $X$ & $X$  & $X$   & $X$  & $X$  & $X$  & $X$   \\ \hline \hline
$a=9$ &  $b$                      & $0$ & $1$  & $2$  & $5$  & $7$  & $8$  & $10$  \\ \hline
   &  Transf. $\gamma_{\dots}$    & $1$ & $X$  & $5$   & $X$  & $2$  & $X$  & $X$   \\ \hline \hline
$a=10$ &  $b$                     & $0$ & $1$  & $2$ & $5$  & $7$  & $8$  & $9$  \\ \hline
   &  Transf. $\gamma_{\dots}$    & $X$ & $X$  & $X$  & $X$  & $X$  & $X$  & $X$   \\ \hline \hline
\end{tabular}
\caption{Elimination of color $4$ from under-arcs joining crossings whose over-arcs bear distinct colors ($a$ and $b$). The ``!'' means the values of the $a$ and $b$ parameters have to be interchanged.}
\label{Ta:4gamma}
\end{center}
\end{table}


\begin{table}[h!]
\begin{center}
\begin{tabular}{| c ||   c |   c |   c |  c |   c |  c |   c |  c |   c |}\hline
$a$                        & $0$ &  $1$ & $2$    & $5$  & $7$  & $8$  & $9$  & $10$ \\ \hline
Transf. $\delta_{\dots}$   & $1$ &  $X$ & $6$    & $X$  & $9$  & $X$  & $7$  & $X$   \\ \hline
\end{tabular}
\caption{Elimination of color $4$ from under-arcs joining crossings whose over-arcs bear the same color.}
\label{Ta:4delta}
\end{center}
\end{table}







\begin{table}[h!]
\begin{center}
\begin{tabular}{| c ||   c |  c |   c |  c |   c |  c |    c |   c |  }\hline
$a$                          & $0$      & $1$      & $2$    & $5$      & $7$   & $9$     & $10$          \\ \hline
Transf. $\alpha_{\dots}$     & $X$      & $1$      & $1$    & $X$      & $2$   & $1$     & $X$ \\ \hline
\end{tabular}
\caption{Elimination of color $8$ from monochromatic crossings.}
\label{Ta:8alpha}
\end{center}
\end{table}


\begin{table}[h!]
\begin{center}
\begin{tabular}{| c ||   c |  c |   c |   c |  c |   c |  c |   c |}\hline
$a$                      & $0$ & $1$ & $2$ & $5$  & $7$  & $9$   & $10$  \\ \hline
Transf. $\beta_{\dots}$  & $X$ & $1$ & $2$ & $X$ & $!3$  & $3$  & $X$   \\ \hline
\end{tabular}
\caption{Elimination of color $8$ from over-arcs of polichromatic crossings. The elimination of color $7$ via transformation $\beta_{3}$ assumes the change of variables $a=9, 2c-a=7$.}
\label{Ta:8beta}
\end{center}
\end{table}


\begin{table}[h!]
\begin{center}
\begin{tabular}{| c ||  c |||   c  |   c |  c |   c |  c |   c |  c |    }\hline
$a=0$ &  $b$                      & $1$  & $2$ & $5$ & $7$  & $9$  & $10$  \\ \hline
   &  Transf. $\gamma_{\dots}$    & $2$  & $9$& $2$ & $X$  & $2$  & $X$   \\ \hline \hline
$a=1$ &  $b$                      & $0$ & $2$ & $5$  & $7$  & $9$  & $10$  \\ \hline
   &  Transf. $\gamma_{\dots}$    & $1$ & $10$& $1$  & $X$  & $8$  & $X$   \\ \hline \hline
$a=2$ &  $b$                      & $0$  & $1$  & $5$  & $7$  & $9$  & $10$  \\ \hline
   &  Transf. $\gamma_{\dots}$    & $!9$ & $!10$ & $11$ & $X$  & $12$ & $X$   \\ \hline \hline
$a=5$ &  $b$                      & $0$ & $1$  &  $2$ & $7$ & $9$  & $10$  \\ \hline
   &  Transf. $\gamma_{\dots}$    & $1$ & $2$  &  $!11$ & $X$  & $1$  & $X$     \\ \hline \hline
$a=7$ &  $b$                      & $0$ & $1$  & $2$ & $5$  & $9$  & $10$  \\ \hline
   &  Transf. $\gamma_{\dots}$    & $X$  & $X$  & $X$  & $X$  & $X$  & $X$   \\ \hline \hline
$a=9$ &  $b$                      & $0$  & $1$  & $2$ & $5$  & $7$  & $10$  \\ \hline
   &  Transf. $\gamma_{\dots}$    & $1$  & $!8$ & $!12$ & $2$  & $X$  & $X$   \\ \hline \hline
$a=10$ &  $b$                     & $0$  & $1$  & $2$ & $5$  & $7$  & $9$\\ \hline
   &  Transf. $\gamma_{\dots}$    & $X$  & $X$  & $X$  & $X$  & $X$  & $X$   \\ \hline \hline
\end{tabular}
\caption{Elimination of color $8$ from under-arcs joining crossings whose over-arcs bear distinct colors ($a$ and $b$). The ``!'' means the values of the $a$ and $b$ parameters have to be interchanged.}
\label{Ta:8gamma}
\end{center}
\end{table}


\begin{table}[h!]
\begin{center}
\begin{tabular}{| c ||   c |   c |   c |  c |   c |  c |   c |  c | }\hline
$a$                          & $0$ & $1$  & $2$  & $5$  & $7$  & $9$    & $10$  \\ \hline
Transf. $\delta_{\dots}$     & $1$ & $8$ & $10$  & $4$  & $X$  & $11$   & $X$   \\ \hline
\end{tabular}
\caption{Elimination of color $8$ from under-arcs joining crossings whose over-arcs bear the same color.}
\label{Ta:8delta}
\end{center}
\end{table}

\section{Part III: Elimination of colors $9$ and $2$.}\label{subsec:92}






In this Section we eliminate colors $9$ and $2$. Since we have already eliminated six colors ($12, 11, 6, 3, 4$, and $8$) the most frequent symbol in the even numbered rows of the Tables in this Section is $X$. There are, also, four $\delta$ instances which cannot be dealt with the way we did in the preceding section. Instead we have to look into the colors at issue in order to produce adequate transformations. These instances are denoted $D_1, D_2, D_3,$ and $D_4$ in Tables \ref{Ta:9delta} and \ref{Ta:2delta} below and presented in the figures in this Section.

Since these $D_i$ transformations involve a different approach, a few words are in order here. $D_1$ and $D_2$ have to do with the $\delta$ instance of the elimination of color $9$, whereas $D_3$ and $D_4$ have to do with the $\delta$ instance of the elimination of color $2$. $D_1$ and $D_2$ ($D_3$ and $D_4$, respect.) are the very last cases to be resolved in the elimination of color $9$ (color $2$, respect.). Figure \ref{fig:d1} displays transformation $D_1$. It has to do with the elimination of color $9$ from an under-arc between two over-arcs colored both with color $5$. Transformation $D_1$ accomplishes this by reducing the problem to the elimination of color $9$ from an under-arc between two over-arcs, one colored $5$ and the other colored $7$. This situation has been resolved before - see Table \ref{Ta:9gamma}, Transformation $\gamma_{14}$ - so the elimination of color $9$ from an under-arc between two over-arcs colored with $5$ is accomplished.

Now for transformation $D_2$, which realizes the elimination of color $9$ from an under-arc between two over-arcs colored $7$. We start by realizing what the possibilities are for the over-arcs colored $7$. These over-arcs have to eventually end up at a polichromatic crossing for otherwise there would be a split component colored with $7$, and the link would have $0$ determinant which contradicts our assumptions. Since the only colors available now are $0, 1, 2, 5, 7, 10$ the possibilities for the triplets $\{ a, b, c \}$ from this set that realize $2b=a+c$ mod $13$ are displayed in Table \ref{Ta:tripletsfrom0125710}. There are thus two possibilities for an over-arc colored with $7$. Either the $7$ ends up at a crossing whose over-arc is colored $2$ and the other under-arc is colored $10$; or the over-arc is colored $10$ and the other under-arc is colored $0$. In Figure \ref{fig:d20} we show that other possibilities for colors at under-arcs compliant with the $7$ on the over-arc before it ends up at a polichromatic crossing do not impede the progress of color $10$. The role of color $10$ in the elimination of color $9$ is shown in Figures \ref{fig:d2} and \ref{fig:d2bis}. They show the elimination of color $9$ from an under-arc between two over-arcs colored with $7$ is accomplished.

Now for transformation $D_3$, which realizes the elimination of color $2$ from an under-arc between two over-arcs colored $1$. This over-arc colored $1$ has to eventually end up at a polichromatic crossing. There is only one possibility for this polichromatic crossing: its over-arc is colored $7$ and the other under-arc is colored $0$, see Table \ref{Ta:tripletsfrom015710}. Figure \ref{fig:d30} shows us that other crossings that the $1$ may be an over-arc to, do not impede the progression of the $7$ to a convenient neighborhood of the $2$ we would like to eliminate. Then Figure \ref{fig:d3} shows how to eliminate color $2$ from the situation at issue.

Finally, for transformation $D_4$, which realizes the elimination of color $2$ from an under-arc between two over-arcs colored $10$. We argue as follows. There has to be an arc colored $1$ somewhere in the diagram under study. By performing Reidemeister type II moves we bring the $1$ to a convenient neighborhood of the $2$ we want to eliminate. We do eliminate the $2$ with the help of the $1$ since the colors available besides $1$ are $0, 5, 7, 10$. As a matter of fact, we can see in Table \ref{Ta:tripletsfrom015710} that a $1$ going over  $5$ produces a $10$ and a $1$ going over a $10$ produces a $5$ and these colors are all admissible. A $1$ going over a $0$ gives rise to a $2$ and we saw in Figure \ref{fig:d3} how to deal with this situation. Finally, a $1$ going over a $7$ gives rise to an $8$ and we deal with this situation using Transformation $\delta_8$, see Figure \ref{fig:sameunder18} and Table \ref{Ta:8delta}, with $a=1$ and $c=8$. When we list the linear expressions corresponding to Transformation $\delta_8$ and evaluated at $a=1$ and $c=8$ we obtain three colors outside the set $\{ 0, 1, 5, 7, 10 \}$:
\[
2c-a = 2 \qquad \qquad 12a-11c = 2 \qquad \qquad 3c-2a = 9
\]
But this $2c-a=2$ in the final diagram of Transformation $\delta_8$ corresponds to an under-arc colored $2$ between two over-arcs colored $1$ - and this has been resolved in Figure \ref{fig:d3} with Transformation $D_3$. As for $12a-11c=2$, it corresponds to an under-arc colored $2$ between an over-arc colored $1$ and an over-arc colored $10$ and this has been dealt with (see Table \ref{Ta:2gamma}). Finally, $3c-2a=9$ corresponds to an over-arc colored $9$ between two over-arcs colored $7$ and this has also been dealt with (see Figure \ref{fig:d2}). Then Figure \ref{fig:d4new} shows how to proceed in the elimination of color $2$ from an under-arc between two over-arcs colored $10$. This concludes the proof of Theorem \ref{thm:main}.


\begin{table}[h!]
\begin{center}
\begin{tabular}{| c ||   c |  c |   c |  c |   c |  c |    c |    }\hline
$a$                          & $0$   & $1$       & $2$       & $5$      & $7$       & $10$          \\ \hline
Transf. $\alpha_{\dots}$     & $2$   & $X$       & $X$       & $1$      & $X$       & $X$ \\ \hline
\end{tabular}
\caption{Elimination of color $9$ from monochromatic crossings.}
\label{Ta:9alpha}
\end{center}
\end{table}


\begin{table}[h!]
\begin{center}
\begin{tabular}{| c ||   c |  c |   c |   c |  c |   c |  c |  }\hline
$a$                         & $0$   & $1$    & $2$   & $5$  & $7$  & $10$      \\ \hline
Transf. $\beta_{\dots}$     & $2$   & $X$    & $X$   & $1$  & $X$  & $X$   \\ \hline
\end{tabular}
\caption{Elimination of color $9$ from over-arcs of polichromatic crossings.}
\label{Ta:9beta}
\end{center}
\end{table}


\begin{table}[h!]
\begin{center}
\begin{tabular}{| c ||  c |||   c  |   c |  c |   c |  c |   c |     }\hline
$a=0$ &  $b$                     & $1$    & $2$   & $5$  & $7$  & $10$     \\ \hline
   &  Transf. $\gamma_{\dots}$   & $X$ & $X$   & $X$  & $X$  & $X$   \\ \hline \hline
$a=1$ &  $b$                     & $0$    & $2$   & $5$  & $7$  & $10$     \\ \hline
   &  Transf. $\gamma_{\dots}$   & $X$ & $X$  & $X$  & $X$  & $X$   \\ \hline \hline
$a=2$ &  $b$                     & $0$   & $1$    & $5$  & $7$  & $10$     \\ \hline
   &  Transf. $\gamma_{\dots}$   & $X$   & $X$ & $X$  & $X$  & $X$   \\ \hline \hline
$a=5$ &  $b$                     & $0$   & $1$    & $2$   & $7$  & $10$     \\ \hline
   &  Transf. $\gamma_{\dots}$   & $X$ & $X$   & $X$  & $14$ & $X$     \\ \hline \hline
$a=7$ &  $b$                     & $0$   & $1$    & $2$   & $5$  & $10$     \\ \hline
   &  Transf. $\gamma_{\dots}$   & $X$  & $X$  & $X$  & $!14$  & $X$   \\ \hline \hline
$a=10$ &  $b$                    & $0$   & $1$    & $2$   & $5$  & $7$    \\ \hline
   &  Transf. $\gamma_{\dots}$   & $X$  & $X$  & $X$  & $X$  & $X$   \\ \hline \hline
\end{tabular}
\caption{Elimination of color $9$ from under-arcs joining crossings whose over-arcs bear distinct colors ($a$ and $b$). The ``!'' means the values of the $a$ and $b$ parameters have to be interchanged.}
\label{Ta:9gamma}
\end{center}
\end{table}


\begin{table}[h!]
\begin{center}
\begin{tabular}{| c ||   c |   c |   c |  c |   c |  c |   c |  }\hline
$a$                        & $0$   & $1$    & $2$   & $5$    & $7$    & $10$     \\ \hline
Transf. $\delta_{\dots}$   & $X$   & $X$    & $X$   & $D_1$  & $D_2$  & $X$   \\ \hline
\end{tabular}
\caption{Elimination of color $9$ from under-arcs joining crossings whose over-arcs bear the same color.}
\label{Ta:9delta}
\end{center}
\end{table}

\begin{figure}[!ht]
	\psfrag{0}{\huge$0$}
	\psfrag{1}{\huge$1$}
	\psfrag{2}{\huge$2$}
	\psfrag{5}{\huge$5$}
	\psfrag{7}{\huge$7$}
	\psfrag{9}{\huge$9$}
	\psfrag{10}{\huge$10$}
	\psfrag{13}{\huge$\mathbf{13}$}
	 \centerline{\scalebox{.4}{\includegraphics{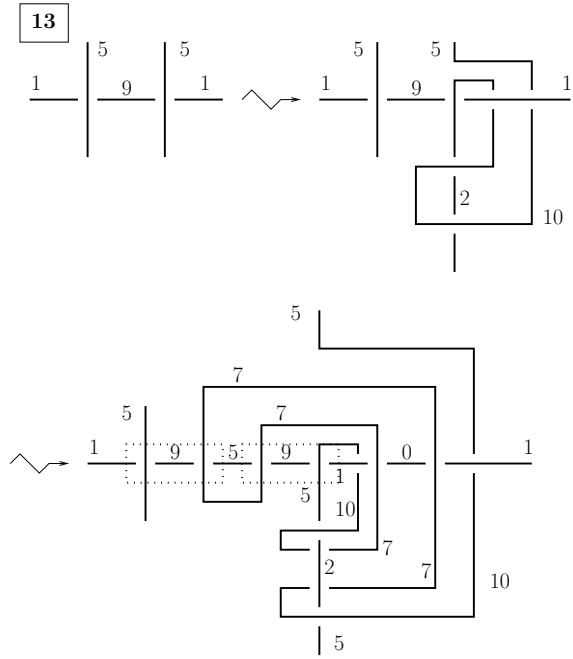}}}
	\caption{Transformation $D_{1}$. The issues in the dotted boxes are dealt with with transformation $\gamma_{14}$, see Table \ref{Ta:9gamma}.}\label{fig:d1}
\end{figure}

\begin{table}[h!]
\begin{center}
\begin{tabular}{| c ||   c |  c |   c |  c |   c |  c |    c |    }\hline
color on over-arc                          & $0$   & $1$       & $2$       & $5$      & $7$       & $10$          \\ \hline
colors on under-arcs     &    & $\{ 0, 2  \}$       & $\{ 7, 10  \}$       & $\{ 0, 10  \}$   & $\{ 0, 1 \}$      & $\{ 0, 7  \}$       \\
                         &    & $\{ 5, 10  \}$       &                      &                  &                   & $\{ 2, 5  \}$       \\ \hline
\end{tabular}
\caption{For each color $b$ on the top row, the duplet(s) under it displays the colors $\{ a, c \}$ from $\{ 0, 1, 2, 5, 7, 10 \}$ on the under-arcs that satisfy $2b=a+c$, mod $13$, non-trivially.}
\label{Ta:tripletsfrom0125710}
\end{center}
\end{table}

\begin{figure}[!ht]
	\psfrag{0}{\huge$0$}
	\psfrag{1}{\huge$1$}
	\psfrag{2}{\huge$2$}
	\psfrag{5}{\huge$5$}
	\psfrag{7}{\huge$7$}
	\psfrag{9}{\huge$9$}
	\psfrag{10}{\huge$10$}
	\psfrag{11}{\huge$11$}
	\psfrag{8}{\huge$8$}
	\psfrag{12}{\huge$12$}
	\psfrag{13}{\huge$\mathbf{13}$}
	 \centerline{\scalebox{.4}{\includegraphics{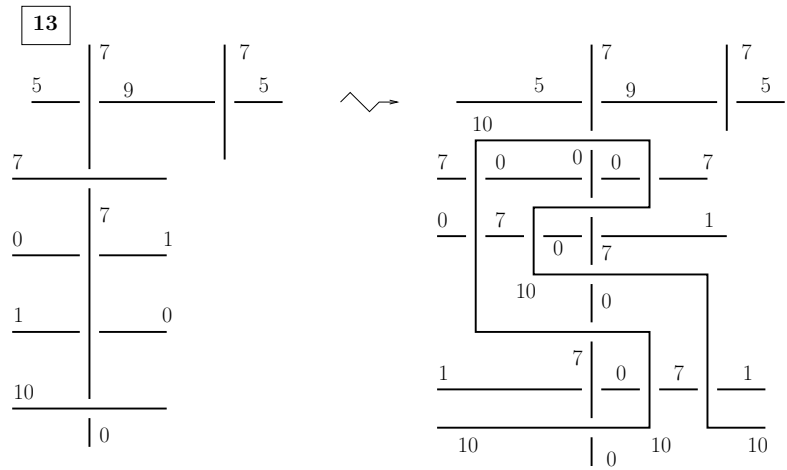}}}
	\caption{Preliminary considerations for Transformation $D_{2}$ below. These preliminaries allow us to disregard some complications below by showing that the $10$ can progress all the way up to a convenient neighborhood of the $9$.}\label{fig:d20}
\end{figure}

\begin{figure}[!ht]
	\psfrag{0}{\huge$0$}
	\psfrag{1}{\huge$1$}
	\psfrag{2}{\huge$2$}
	\psfrag{5}{\huge$5$}
	\psfrag{7}{\huge$7$}
	\psfrag{9}{\huge$9$}
	\psfrag{10}{\huge$10$}
	\psfrag{11}{\huge$11$}
	\psfrag{8}{\huge$8$}
	\psfrag{12}{\huge$12$}
	\psfrag{13}{\huge$\mathbf{13}$}
	 \centerline{\scalebox{.4}{\includegraphics{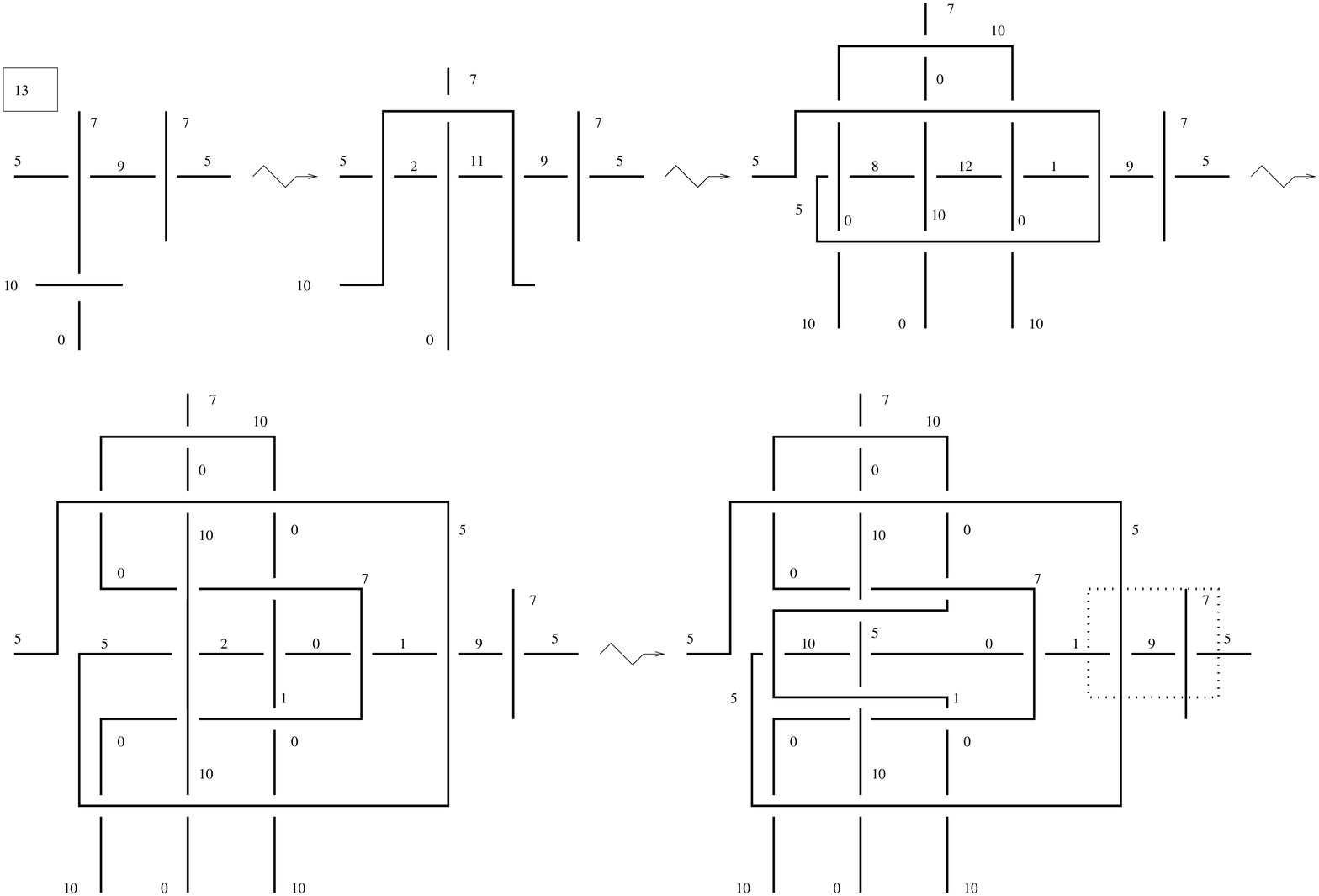}}}
	\caption{Transformation $D_{2}$, part $I$: the over-arc on the top left bearing color $7$ ends up at a crossing whose over-arc bears color $10$. The issue in the dotted box is  dealt with with transformation $\gamma_{14}$, see Table \ref{Ta:9gamma}. The last step will be useful for considerations in the sequel when removing $2$ from the list of colors, see Figures \ref{fig:d30} and \ref{fig:d40+}.}\label{fig:d2}
\end{figure}

\begin{figure}[!ht]
	\psfrag{0}{\huge$0$}
	\psfrag{1}{\huge$1$}
	\psfrag{2}{\huge$2$}
	\psfrag{5}{\huge$5$}
	\psfrag{7}{\huge$7$}
	\psfrag{9}{\huge$9$}
	\psfrag{10}{\huge$10$}
	\psfrag{11}{\huge$11$}
	\psfrag{8}{\huge$8$}
	\psfrag{12}{\huge$12$}
	\psfrag{13}{\huge$\mathbf{13}$}
	 \centerline{\scalebox{.4}{\includegraphics{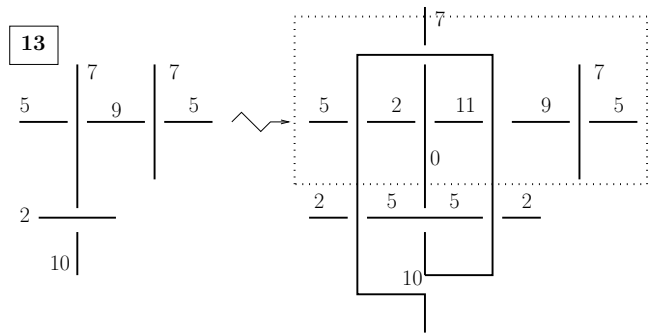}}}
	\caption{Transformation $D_{2}$, part $II$: the over-arc on the top left bearing color $7$ ends up at a crossing whose over-arc bears color $2$. The issue in the dotted box is dealt with with the transformation from Figure \ref{fig:d2}.}\label{fig:d2bis}
\end{figure}







\begin{table}[h!]
\begin{center}
\begin{tabular}{| c ||   c |  c |   c |  c |   c |  c |     }\hline
$a$                       & $0$   & $1$   & $5$  & $7$  & $10$     \\ \hline
Transf. $\alpha_{\dots}$  & $X$   & $X$   & $X$  & $2$  & $1$ \\ \hline
\end{tabular}
\caption{Elimination of color $2$ from monochromatic crossings.}
\label{Ta:2alpha}
\end{center}
\end{table}


\begin{table}[h!]
\begin{center}
\begin{tabular}{| c ||   c |  c |   c |   c |  c |   c |  c |   c |}\hline
$a$                      & $0$   & $1$ & $5$  & $7$  & $10$     \\ \hline
Transf. $\beta_{\dots}$  & $X$   & $X$ & $X$  & $2$  & $1$   \\ \hline
\end{tabular}
\caption{Elimination of color $2$ from over-arcs of polichromatic crossings.}
\label{Ta:2beta}
\end{center}
\end{table}


\begin{table}[h!]
\begin{center}
\begin{tabular}{| c ||  c |||   c  |   c |  c |   c |  c |    } \hline
$a=0$ &  $b$                     & $1$  & $5$  & $7$  & $10$    \\ \hline
   &  Transf. $\gamma_{\dots}$   & $X$  & $X$  & $X$  & $X$   \\ \hline \hline
$a=1$ &  $b$                     & $0$  & $5$  & $7$  & $10$    \\ \hline
   &  Transf. $\gamma_{\dots}$   & $X$  & $X$  & $X$  & $1$   \\ \hline \hline
$a=5$ &  $b$                     & $0$  & $1$  & $7$  & $10$    \\ \hline
   &  Transf. $\gamma_{\dots}$   & $X$  & $X$  & $X$  & $X$     \\ \hline \hline
$a=7$ &  $b$                     & $0$  & $1$  & $5$  & $10$    \\ \hline
   &  Transf. $\gamma_{\dots}$   & $X$  & $X$  & $X$  & $X$   \\ \hline \hline
$a=10$ &  $b$                    & $0$  & $1$  & $5$  & $7$   \\ \hline
   &  Transf. $\gamma_{\dots}$   & $X$  & $2$  & $X$  & $X$   \\ \hline
\end{tabular}
\caption{Elimination of color $2$ from under-arcs joining crossings whose over-arcs bear distinct colors ($a$ and $b$).}
\label{Ta:2gamma}
\end{center}
\end{table}


\begin{table}[h!]
\begin{center}
\begin{tabular}{| c ||   c |   c |   c |  c |   c |  c |   c |  c | }\hline
$a$                       & $0$ & $1$    & $5$  & $7$  & $10$     \\ \hline
Transf. $\delta_{\dots}$  & $X$ & $D_3$  & $X$  & $X$  & $D_4$   \\ \hline
\end{tabular}
\caption{Elimination of color $2$ from under-arcs joining crossings whose over-arcs bear the same color.}
\label{Ta:2delta}
\end{center}
\end{table}

\begin{table}[h!]
\begin{center}
\begin{tabular}{| c ||   c |  c |   c |   c |  c |   c |  c |   c |}\hline
color on over-arc                      & $0$   & $1$ & $5$  & $7$  & $10$     \\ \hline
colors on under-arc  &    & $\{ 5, 10 \}$ & $\{ 0, 10 \}$  & $\{ 0, 1 \}$  & $\{ 0, 7 \}$   \\ \hline
\end{tabular}
\caption{For each color $b$ on the top row, the duplet under it displays the colors $\{ a, c \}$ from $\{ 0, 1, 5, 7, 10 \}$ on the under-arcs that satisfy $2b=a+c$, mod $13$, non-trivially.}
\label{Ta:tripletsfrom015710}
\end{center}
\end{table}

\begin{figure}[!ht]
	\psfrag{0}{\huge$0$}
	\psfrag{1}{\huge$1$}
	\psfrag{2}{\huge$2$}
	\psfrag{5}{\huge$5$}
	\psfrag{7}{\huge$7$}
	\psfrag{9}{\huge$9$}
	\psfrag{10}{\huge$10$}
	\psfrag{11}{\huge$11$}
	\psfrag{8}{\huge$8$}
	\psfrag{12}{\huge$12$}
	\psfrag{13}{\huge$\mathbf{13}$}
	 \centerline{\scalebox{.4}{\includegraphics{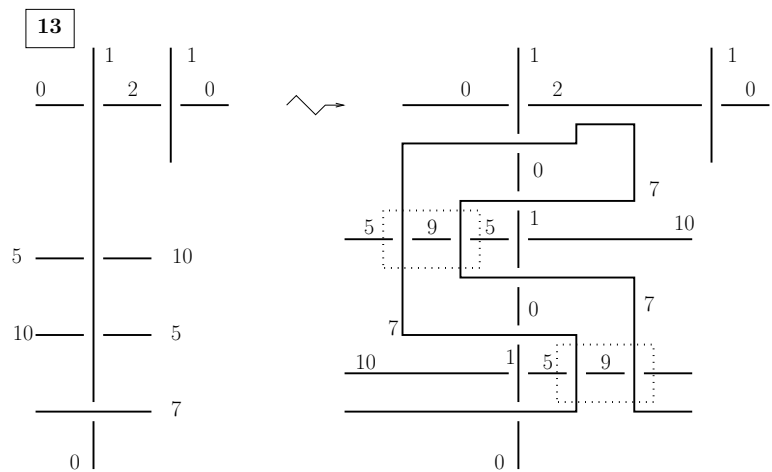}}}
	\caption{Preliminaries to Transformation $D_{3}$. These preliminaries allow us to disregard some complications below by showing that the $7$ can progress all the way up to a convenient neighborhood of the $2$. Note that the issues in the dotted boxes have been resolved before without resorting to color $2$, see Figure \ref{fig:d2}. Note also that Transformation $\gamma_{14}$ (Figure \ref{fig:diffunder76}) for $a=5, b=7, c=9$ only involves $0, 1, 5, 7, 10$.  }\label{fig:d30}
\end{figure}

\begin{figure}[!ht]
	\psfrag{0}{\huge$0$}
	\psfrag{1}{\huge$1$}
	\psfrag{2}{\huge$2$}
	\psfrag{5}{\huge$5$}
	\psfrag{7}{\huge$7$}
	\psfrag{9}{\huge$9$}
	\psfrag{10}{\huge$10$}
	\psfrag{11}{\huge$11$}
	\psfrag{8}{\huge$8$}
	\psfrag{12}{\huge$12$}
	\psfrag{13}{\huge$\mathbf{13}$}
	 \centerline{\scalebox{.4}{\includegraphics{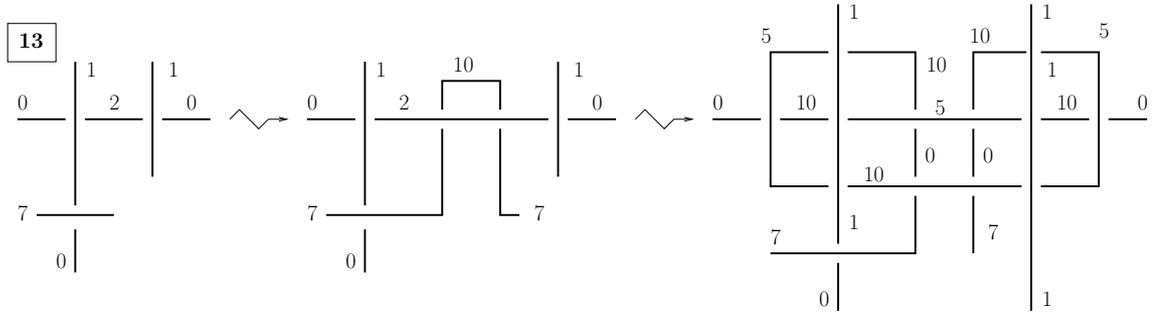}}}
	\caption{Transformation $D_{3}$.}\label{fig:d3}
\end{figure}

\begin{figure}[!ht]
	\psfrag{0}{\huge$0$}
	\psfrag{1}{\huge$1$}
	\psfrag{2}{\huge$2$}
	\psfrag{5}{\huge$5$}
	\psfrag{7}{\huge$7$}
	\psfrag{9}{\huge$9$}
	\psfrag{10}{\huge$10$}
	\psfrag{11}{\huge$11$}
	\psfrag{8}{\huge$8$}
	\psfrag{12}{\huge$12$}
	\psfrag{13}{\huge$\mathbf{13}$}
	 \centerline{\scalebox{.4}{\includegraphics{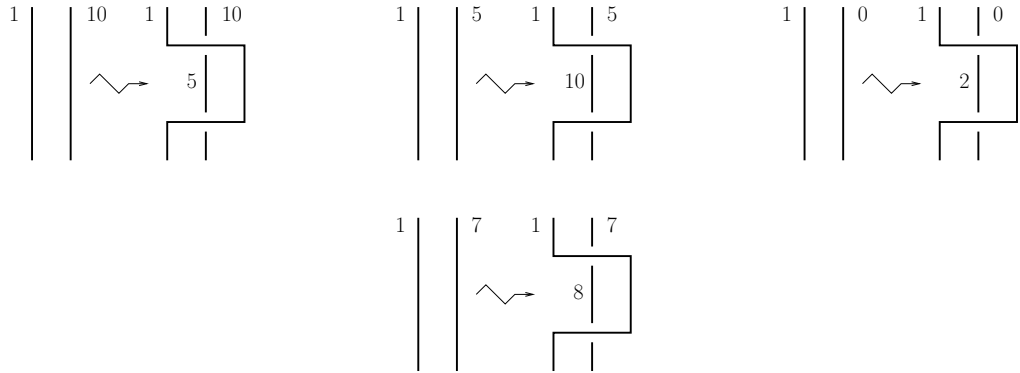}}}
	\caption{Preliminaries to Transformation $D_{4}$. In the first row, there are three instances of an arc colored $1$ moving past arcs colored $5$, $10$, and $0$. For the first two, the new color that shows up belongs to the set of colors available, $\{ 0, 1, 5, 7, 10  \}$. For the last one, transformation $D_3$ in Figure \ref{fig:d3} shows how to eliminate color $2$ without resorting to colors outside the set $\{ 0, 1, 5, 7, 10  \}$. In the second row the arc colored $1$ moves past the arc colored $7$ producing an $8$. This $8$ can be eliminated via Transformation $\delta_8$ and then some more considerations, as detailed in Figure \ref{fig:d40+}.}\label{fig:d40}
\end{figure}

\clearpage

\begin{figure}[!ht]
	\psfrag{a}{\huge$a$}
	\psfrag{2c-a}{\huge$2c-a$}
	\psfrag{3a-2c}{\huge$3a-2c$}
	\psfrag{11c-10a}{\huge$11c-10a$}
	\psfrag{2a-c}{\huge$2a-c$}
	\psfrag{3c-2a}{\huge$3c-2a$}
	\psfrag{6a-5c}{\huge$6a-5c$}
	\psfrag{12a-11c}{\huge$12a-11c$}
	\psfrag{9a-8c}{\huge$9a-8c$}
	\psfrag{0}{\huge$0$}
	\psfrag{1}{\huge$1$}
	\psfrag{2}{\huge$2$}
	\psfrag{5}{\huge$5$}
	\psfrag{7}{\huge$7$}
	\psfrag{9}{\huge$9$}
	\psfrag{10}{\huge$10$}
	\psfrag{11}{\huge$11$}
	\psfrag{8}{\huge$8$}
	\psfrag{12}{\huge$12$}
	\psfrag{13}{\huge$\mathbf{13}$}
	 \centerline{\scalebox{.4}{\includegraphics{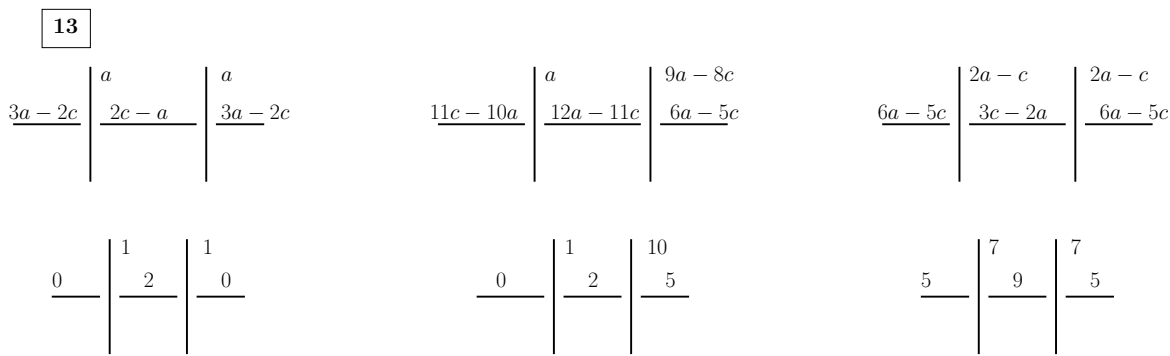}}}
	\caption{Preliminaries to Transformation $D_{4}$: resolving the $8$ produced by having an arc colored $1$ going over an arc colored $7$. The three columns in this Figure show the three instances where Transformation $\delta_8$, applied with $a=1$ and $c=8$, yield colors outside the set $\{ 0, 1, 5, 7, 10 \}$, namely $2$ and $9$. The instance corresponding to the leftmost column is resolved as depicted in Figure \ref{fig:d3}. The instance corresponding to the middle column is resolved with Transformation $\gamma_1$, see Table \ref{Ta:2gamma}. Finally, the instance corresponding to rightmost column is resolved as depicted in Figure \ref{fig:d2}. We remark that there still has to be used Transformation $\gamma_{14}$, see Figure \ref{fig:diffunder76}, with $a=5, b=7, c=9$ which eliminates color $9$ resorting only to colors from $\{ 0, 1, 5, 7, 10  \}$.}\label{fig:d40+}
\end{figure}

\begin{figure}[!ht]
	\psfrag{0}{\huge$0$}
	\psfrag{1}{\huge$1$}
	\psfrag{2}{\huge$2$}
	\psfrag{5}{\huge$5$}
	\psfrag{7}{\huge$7$}
	\psfrag{9}{\huge$9$}
	\psfrag{10}{\huge$10$}
	\psfrag{11}{\huge$11$}
	\psfrag{8}{\huge$8$}
	\psfrag{12}{\huge$12$}
	\psfrag{13}{\huge$\mathbf{13}$}
	 \centerline{\scalebox{.4}{\includegraphics{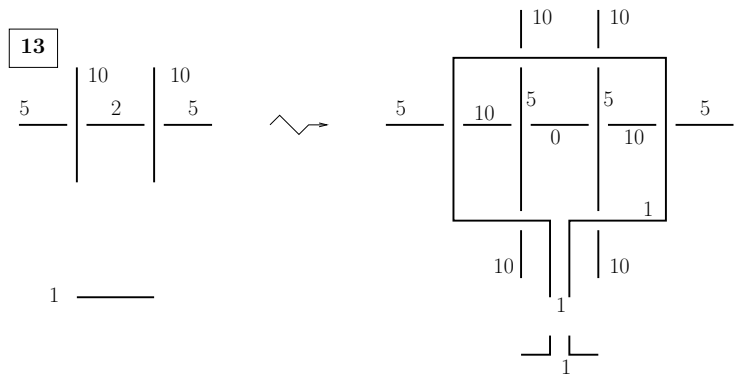}}}
	\caption{Transformation $D_{4}$.}\label{fig:d4new}
\end{figure}

\end{document}